\newtheorem*{rep@theorem}{\rep@title}
\newcommand{\newreptheorem}[2]{%
\newenvironment{rep#1}[1]{%
 \def\rep@title{#2 \ref{##1}}%
 \begin{rep@theorem}}%
 {\end{rep@theorem}}}
\newtheorem{theorem}{Theorem}[section]
\newtheorem{remark}[theorem]{Remark}
\newtheorem{lemma}[theorem]{Lemma}
\newtheorem{corollary}[theorem]{Corollary}
\newtheorem{proposition}[theorem]{Proposition}
\newtheorem{definition}[theorem]{Definition}
\newtheorem{notation}[theorem]{Notation}
\numberwithin{equation}{section}
\numberwithin{table}{section} 
\numberwithin{figure}{section}
\theoremstyle{definition}
\newtheorem{example}[theorem]{Example}
\newcommand{\N}{\mathbb{N}}
\newcommand{\Z}{\mathbb{Z}}
\newcommand{\Q}{\mathbb{Q}}
\newcommand{\R}{\mathbb{R}}
\newcommand{\C}{\mathbb{C}}
\newcommand{\p}{\mathbf{p}}
\newcommand{\q}{\mathbf{q}}
\newcommand{\rr}{\mathbf{r}}
\newcommand{\s}{\mathbf{s}}
\newcommand{\mb }{\mathbf }
\DeclareMathOperator{\Parity}{Parity}
\DeclareMathOperator{\Sign}{Sign}
\begin{document}

\title{The Signs in Elliptic Nets}

\author{Amir Akbary, Manoj Kumar,  and Soroosh Yazdani}

\thanks{Research of the authors is partially supported by NSERC}

\date{\today}

\keywords{\noindent elliptic divisibility sequences, division polynomials, elliptic nets, net polynomials}

\subjclass[2010]{11G05, 11G07, 11B37.}

\address{Department of Mathematics and Computer Science \\
        University of Lethbridge \\
        Lethbridge, AB T1K 3M4 \\
        Canada}
\email{amir.akbary@uleth.ca}


\email{manoj.kumar@uwaterloo.ca}
\email{syazdani@gmail.com}

\begin{abstract} 
We give a generalization of a theorem of Silverman and Stephens regarding the signs in an 
elliptic divisibility sequence to the case of an elliptic net. We also describe applications 
of this theorem in the study of the distribution of the signs in elliptic nets and 
generating elliptic nets using the denominators of the linear combination of points on 
elliptic curves.   
\end{abstract}


\maketitle

\vspace{-.75cm}
\tableofcontents
\vspace{-.75cm}

\section{Introduction}
\begin{definition}
An elliptic sequence $(W_n)$ over a field $K$ is a sequence of elements of $K$ satisfying the 
non-linear recurrence  
 \begin{equation}\label{EDS1}
			W_{m+n}W_{m-n} = W_{m+1}W_{m-1}W_{n}^2-W_{n+1}W_{n-1}W_{m}^2   
 \end{equation}  
for all $m, n\in \mathbb{Z}.$ An elliptic sequence is said to be non-degenerate if 
$W_1W_2W_3\neq 0.$  Furthermore, if $W_1=1,$ we call it a normalized elliptic sequence.  
\end{definition}
We can show that for a non-degenerate elliptic sequence $W_0=0$ (let $m=n=1$ in \eqref{EDS1}), $W_1=\pm 1$ (let $m=2$, $n=1$ in \eqref{EDS1}), and $W_{-n}=-W_n.$
The non-trivial examples of elliptic sequences can be obtained by addition of points on cubics.
Let $E$ be a cubic curve, defined over a field $K$, given by the Weierstrass equation $f(x, y)=0$, where 
\begin{equation}
    \label{WE}
    f(x,y):=y^2+a_1 xy+a_3 y- x^3-a_2 x^2-a_4 x-a_6;
    ~~~a_i\in K.
\end{equation}
Let $E^{\rm ns}(K)$ be the collection of non-singular $K$-rational points of $E.$ It is known that  $E^{\rm ns}(K)$ forms a group. Moreover, there are polynomials $\phi_n,~\psi_n$, and $\omega_n\in
\Z[a_1, a_2, a_3, a_4,a_6][x, y]$ such that
for any $P \in E^{\rm ns}(K)$ we have
$$nP=\left(\frac{\phi_n(P)}{\psi_n^2 (P)}, \frac{\omega_n(P)}{\psi_n^3(P)} \right).$$
In addition, $\psi_n$ satisfies the recursion 
\begin{equation}
    \label{dpr}
    \psi_{m+n}\psi_{m-n} =  \psi_{m+1}\psi_{m-1}\psi_{n}^{2} - \psi_{n+1}\psi_{n-1}\psi_{m}^{2}.
\end{equation}
The polynomial $\psi_n$ is called the \emph{$n$-th division polynomial} associated to $E.$ 
(See \cite[Chapter 2]{Lang1} for the basic properties of division polynomials.) The equation \eqref{dpr} shows that $(\psi_n(P))$ is an elliptic sequence over $K.$
A remarkable fact, first observed by Ward for integral (integer-valued) elliptic sequences, is that any normalized non-degenerate elliptic sequence can be realized as a sequence $(\psi_n(P)).$ A concrete version of this statement is given in the following proposition (See \cite[Theorem 4.5.3]{Swart}).

\begin{proposition}[{\bf Swart}] Let $(W_n)$ be a normalized non-degenerate elliptic sequence. Then there is a cubic curve $E$ with equation $f(x, y)=0$, where $f(x,y)$ is given by \eqref{WE} and with 
\begin{multline*}
a_1=\frac{W_4+W_2^5-2W_2W_3}{W_2^2 W_3},~a_2=\frac{W_2W_3^2+W_4+W_2^5-W_2W_3}{W_2^3 W_3},~a_3=W_2, ~a_4=1,~{\rm and}~a_6=0,
\end{multline*}
such that $W_n=\psi_n((0,0))$, where $\psi_n$ is the {$n$-th division polynomial} associated to $E.$
\end{proposition}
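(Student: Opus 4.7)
\medskip
\noindent\textbf{Proof proposal.} The plan is to force the point $(0,0)$ to lie on $E$ by taking $a_6=0$, fix $a_4=1$ as a convenient normalization, and then pin down $a_1,a_2,a_3$ by matching $\psi_j((0,0))$ with $W_j$ for $j=2,3,4$. Once this is done, the fact that both $(W_n)$ and $\bigl(\psi_n((0,0))\bigr)$ satisfy the identical non-linear recurrence \eqref{EDS1} = \eqref{dpr} propagates the equality to all $n\in\Z$.

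First, with $a_4=1$ and $a_6=0$ imposed, I would use the standard formulas for the division polynomials $\psi_2,\psi_3,\psi_4$ in terms of the $a_i$ and the usual $b_i$, and evaluate them at $(x,y)=(0,0)$. A direct computation yields
\[
\psi_2((0,0))=a_3,\qquad \psi_3((0,0))=a_2a_3^2-a_1a_3-1,
\]
\[
\psi_4((0,0))=a_3\bigl((2+a_1a_3)(a_2a_3^2-a_1a_3-1)-a_3^4\bigr).
\]
Setting $\psi_j((0,0))=W_j$ for $j=2,3,4$ first forces $a_3=W_2$; substituting this back makes the remaining two equations linear in $a_1$ and $a_2$. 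Non-degeneracy of $(W_n)$ gives $W_2W_3\neq 0$, so the system is non-singular and a straightforward calculation recovers exactly the rational expressions for $a_1$ and $a_2$ displayed in the proposition.

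With the coefficients $a_1,\dots,a_6$ now fixed, both $\bigl(\psi_n((0,0))\bigr)$ and $(W_n)$ are normalized non-degenerate elliptic sequences over $K$ that agree at $n=0,1,2,3,4$. I would finish by induction on $|n|$: for an odd index $2k+1\ge 5$, the recurrence \eqref{EDS1} with $(m,n)=(k+1,k)$ yields
\[
W_{2k+1}=W_{k+2}W_k^3-W_{k-1}W_{k+1}^3,
\]
and for an even index $2k\ge 6$, the choice $(m,n)=(k+1,k-1)$ gives
\[
W_{2k}=\frac{W_k}{W_2}\bigl(W_{k+2}W_{k-1}^2-W_{k-2}W_{k+1}^2\bigr).
\]
The identical two relations hold with each $W_j$ replaced by $\psi_j((0,0))$, so the inductive step goes through, and negative indices are handled by the antisymmetry $W_{-n}=-W_n$ (and similarly for $\psi_n$).

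The main obstacle is the algebra in the second step: writing out $\psi_3$ and $\psi_4$ correctly at $(0,0)$ as polynomials in $a_1,a_2,a_3$, and then verifying that the displayed rational expressions really solve the resulting $2\times 2$ system after the substitution $a_3=W_2$. The inductive third step is essentially formal, once one notices that the elliptic recurrence together with $W_2\neq 0$ lets us solve for each successive $W_n$ ($n\ge 5$) from its predecessors.
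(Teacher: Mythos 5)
Your proposal is correct, and it is essentially the standard argument (the paper itself gives no proof, deferring to Swart's thesis, where the proof proceeds exactly this way): impose $a_6=0$, $a_4=1$, solve $\psi_j((0,0))=W_j$ for $j=2,3,4$ to get $a_3=W_2$ and the stated $a_1,a_2$, then propagate via the duplication instances of the recurrence, which determine $W_{2k+1}$ and $W_{2k}$ from lower-index terms since $W_1=1$ and $W_2\neq 0$. Your evaluations of $\psi_2,\psi_3,\psi_4$ at $(0,0)$ and the resulting formulas for $a_1,a_2$ check out, and one may note in passing that $(0,0)$ is automatically a non-singular point of $E$ since $f_y(0,0)=a_3=W_2\neq 0$.
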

We call the pair $(E, (0,0))$ in the above proposition a \emph{curve-point} pair  associated with the elliptic sequence $(W_n).$ Any two curve-point pairs associated to an elliptic sequence 
$(W_n)$ are \emph{uni-homothetic} (see \cite[Section 6.2]{Stange} for definition). A normalized non-degenerate elliptic sequence $(W_n)$ is 
called \emph{non-singular} if the cubic curve $E$ in a curve-point 
$(E, P)$ associated to $(W_n)$ is an elliptic curve (a non-singular cubic).

Ward's version of the above proposition is stated for normalized, non-degenerate, integral 
elliptic divisibility sequences (i.e. an integer-valued elliptic sequence with the property that $W_m \mid W_n$ if $m\mid n$), however examining its proof reveals that in fact it is a theorem 
for any normalized, non-degenerate, elliptic sequence defined over  a subfield of $\mathbb{C}.$  
Moreover Ward represents the terms of such elliptic sequence as values of certain elliptic 
functions at certain complex numbers. 
To explain Ward's representation one observes that for the $n$-th division polynomial $\psi_n$ of an elliptic curve $E$, defined over a subfield $K
$ of $\mathbb{C}$,  we have $$\psi_n(P)=(-1)^{n^2-1} \frac{\sigma(nz;\Lambda)}{\sigma(z;
\Lambda)^{n^2}}$$
for a complex number $z$ and a lattice $\Lambda$ (See \cite[Chapter VI, Exercise 6.15]{Sil2} and \cite[Theorem 2.3.5]{Kumar} for a proof). The lattice $\Lambda$ is the 
lattice associated to $E$ over $\mathbb{C}$ and $\sigma(z;\Lambda)$ is the Weierstrass $\sigma$-
function associated to $\Lambda$ defined as
\begin{equation*}\label{sigma_function}
   \sigma(z;\Lambda) := z\prod_{\substack{\omega\in\Lambda \\ \omega\neq 0}}
  \left(1-\frac{z}{\omega}\right){e}^{\frac{z}{\omega}+\frac{1}{2}\left(\frac{z}{\omega}
  \right)^2}.
\end{equation*}
More precisely Ward proved the following assertion.

\begin{theorem}[\bf Ward]\label{thm=ward}
Let $(W_n)$ be a normalized, non-degenerate, non-singular elliptic divisibility sequence defined over a subfield $K$ of complex numbers. Then there is a 
lattice $\Lambda\subset \C$ and a complex number $z\in \C$ such that 
\begin{equation}
   W_n = \frac{\sigma(nz;\Lambda)}{\sigma(z;\Lambda)^{n^2} } \quad for~ all ~ n\geq 1.
\end{equation} 
Further, the Eisenstein series $g_2(\Lambda)$ and $g_3(\Lambda)$ associated to the 
lattice $\Lambda$ and the Weierstrass values $\wp(z;\Lambda)$ and $\wp'(z;\Lambda)$ associated 
to the point $z$ on the elliptic curve $\C/\Lambda$ are in the field $\Q(W_2,W_3,W_4).$ In 
other words $g_2(\Lambda), g_3(\Lambda), \wp(z;\Lambda), \wp'(z;\Lambda)$ are all defined over 
$K.$
 \end{theorem}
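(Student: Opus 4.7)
The plan is to apply Swart's proposition to realize $(W_n)$ as a division-polynomial sequence on an elliptic curve, uniformize that curve analytically by a lattice in $\mathbb{C}$, and then invoke the classical product formula expressing $\psi_n$ in terms of the Weierstrass $\sigma$-function.

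First, by the preceding proposition of Swart, there is a curve-point pair $(E,P_0)$ with $P_0=(0,0)$ and $W_n=\psi_n(P_0)$ for every $n\geq 1$. Inspection of the explicit formulas for the Weierstrass coefficients shows that $a_1,\ldots,a_6$ all lie in $K':=\mathbb{Q}(W_2,W_3,W_4)\subseteq K$, so $E$ is defined over $K'$ with $P_0\in E(K')$. The non-singularity hypothesis on $(W_n)$ says exactly that $E$ is an elliptic curve.

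Next, I would put $E$ into short Weierstrass form by the standard $K'$-rational substitutions $X=x+b_2/12$ and $Y=2y+a_1x+a_3$, where $b_2=a_1^2+4a_2$; this reduces $E$ to $Y^2=4X^3-g_2X-g_3$ with $g_2,g_3\in K'$. Over $\mathbb{C}$ the classical uniformization theorem then furnishes a unique lattice $\Lambda\subset\mathbb{C}$ with $g_2(\Lambda)=g_2$ and $g_3(\Lambda)=g_3$, together with an analytic isomorphism $\mathbb{C}/\Lambda\xrightarrow{\sim}E(\mathbb{C})$ sending $z\mapsto(\wp(z;\Lambda),\wp'(z;\Lambda))$ in the $(X,Y)$-coordinates. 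Choosing $z\in\mathbb{C}$ with image $P_0$ and substituting $(x,y)=(0,0)$ into the coordinate change yields $\wp(z;\Lambda)=b_2/12\in K'$ and $\wp'(z;\Lambda)=a_3=W_2\in K'$, so all four of $g_2(\Lambda), g_3(\Lambda), \wp(z;\Lambda), \wp'(z;\Lambda)$ are $K$-rational, establishing the second assertion of the theorem.

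Finally, invoking the classical identity $\psi_n(P_0)=(-1)^{n^2-1}\sigma(nz;\Lambda)/\sigma(z;\Lambda)^{n^2}$ cited in the excerpt (Silverman, Exercise VI.6.15; Kumar, Theorem 2.3.5) with $P_0$ and $z$ as above gives the desired product formula for $W_n$, up to the factor $(-1)^{n^2-1}$ appearing for $n$ even; the statement of the theorem omits this sign, which reflects the normalization convention adopted for $\sigma$ (or equivalently for $\psi_n$) in Ward's original formulation and does not affect the $K$-rationality claims. I expect the main technical point to be the careful bookkeeping of the base change to short Weierstrass form, so as to confirm simultaneously that $\Lambda$ can be chosen with $g_2,g_3\in K$ and that the coordinates of $P_0$ translate into $K$-rational values of $\wp(z;\Lambda)$ and $\wp'(z;\Lambda)$; once these identifications are in place, the product formula itself reduces directly to the cited Silverman/Kumar identity.
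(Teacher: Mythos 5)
The paper does not actually prove this theorem: it is quoted verbatim from Silverman--Stephens (\cite[Theorem 3]{Sil1}), so there is no internal proof to compare against. That said, your outline is the standard argument and is essentially sound: Swart's proposition puts $(W_n)$ in the form $\psi_n(P_0)$ with $P_0=(0,0)$ on a curve whose coefficients visibly lie in $\Q(W_2,W_3,W_4)$, the non-singularity hypothesis makes $E$ an elliptic curve, the passage to $Y^2=4X^3-g_2X-g_3$ is $\Q(W_2,W_3,W_4)$-rational, uniformization produces $\Lambda$ with $g_2(\Lambda),g_3(\Lambda)\in\Q(W_2,W_3,W_4)$, and the coordinate change evaluated at $(0,0)$ gives $\wp(z)=b_2/12$ and $\wp'(z)=a_3=W_2$, both in $\Q(W_2,W_3,W_4)$.

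The one place you should not wave your hands is the factor $(-1)^{n^2-1}$. The $\sigma$-function is pinned down by an explicit product, and the identity $\psi_n(P_0)=(-1)^{n^2-1}\sigma(nz;\Lambda)/\sigma(z;\Lambda)^{n^2}$ is quoted with all conventions fixed, so "this is a normalization convention'' does not discharge the discrepancy: as it stands your argument proves $W_n=(-1)^{n^2-1}\sigma(nz)/\sigma(z)^{n^2}$, which is not the claimed formula for even $n$. The fix is a genuine (if short) step: replace $z$ by $-z$. Since $\Lambda=-\Lambda$ and $\sigma$ is odd, one has
\begin{equation*}
\frac{\sigma(-nz;\Lambda)}{\sigma(-z;\Lambda)^{n^2}}
=\frac{-\sigma(nz;\Lambda)}{(-1)^{n^2}\,\sigma(z;\Lambda)^{n^2}}
=(-1)^{n^2-1}\,\frac{\sigma(nz;\Lambda)}{\sigma(z;\Lambda)^{n^2}}=\psi_n(P_0)=W_n,
\end{equation*}
so the theorem holds with the point $-z$ (equivalently $-P_0$); and since $\wp$ is even and $\wp'$ is odd, $\wp(-z)=b_2/12$ and $\wp'(-z)=-W_2$ still lie in $\Q(W_2,W_3,W_4)$, so the rationality assertions survive the replacement. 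With that step made explicit your proof is complete.
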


The above version of Ward's theorem is \cite[Theorem 3]{Sil1}.
In \cite{Sil1} Silverman and Stephens proved a formula regarding signs in an unbounded,  normalized, 
non-degenerate, non-singular, real elliptic sequence. (The results of  \cite{Sil1} stated for integral elliptic divisibility sequences, however their results hold more generally for real elliptic sequences.) In order to describe Silverman-Stephens's theorem we need to set up 
some notation.
\begin{notation}
\label{Notation}
For an elliptic curve $E$ defined over $\R,$ we let $\Lambda \subset\C $ be its corresponding
lattice. Let $E(\R)$ be the group of $\R$-rational points of $E.$ For a point $P\in E(\R)$
we let $z$ be the corresponding complex number under the isomorphism $E(\C)\cong\C/\Lambda.$ 
From the theory of elliptic curves we know that there exists a 
unique $q=e^{2\pi i \tau },$ where $\tau$ is in the upper half-plane, such that 
$\R^*/q^{\Z}\cong E(\R)$ (see Theorem \ref{R_Isomorphism}). Let $u\in \R^*$ be the corresponding real number to the point $P\in 
E(\R)$, where $P\neq \mathcal{O}$ (the point at infinity).We assume that $u$ is normalized such that it satisfies $q<|u|<1$ if $q>0$ and $q^2<u<1$ if $q<0$ (see Lemma \ref{normalize_u}).
Finally, for any non-zero real number $x,$ we define the parity of $x$ by

\begin{equation*}
     \Sign[x] = (-1)^{\Parity[x]}  \qquad \text{where }\Parity[x] \in \Z/2\Z.  
     \end{equation*}
     
\end{notation}

The following is \cite[Theorem 1]{Sil1}.

\begin{theorem}[\bf Silverman-Stephens]\label{thm=Sil-Ste}
Let $(W_n)$ be an unbounded, normalized, non-singular, non-degenerate (integral) elliptic divisibility sequence. Let $(E,P)$ be
a curve-point corresponding to $(W_n).$ Assume conventions given in Notation \ref{Notation}. Then possibly after replacing $(W_n)$ by the related sequence $((-1)^{n^2-1}W_n),$ there is an irrational number $\beta \in \R$, given in Table \ref{table:silbeta}, so that if $q<0$, or $q>0$ and $u>0$, 
\begin{equation*}
\Parity[W_n] \equiv {\lfloor n\beta \rfloor} \pmod 2 ,
\label{u>0} 
\end{equation*}
and if $q>0$ and $u<0$,
\begin{equation*}
\Parity[W_n] \equiv
           \begin{cases}  {\lfloor n\beta \rfloor+n/2} \pmod 2& ~if~ n~ is~ even, \\
                          {(n-1)/2}\hskip.2cm      \pmod 2             &  ~if~ n~ is~ odd.
            \end{cases}\label{u<0}
\end{equation*}
Here $\lfloor . \rfloor$ denotes the greatest integer function.

\begin{table}[h]
\begin{center}
\begin{tabular}{| >{\centering\arraybackslash}m{0.9in} 
                | >{\centering\arraybackslash}m{1in} 
                | >{\centering\arraybackslash}m{1.5in} 
                |}
\hline
\hline
$q$ & $u$ & $\beta$   \parbox{0pt}{\rule{0pt}{2ex+\baselineskip}}\\ \hline
\multirow{3}{*}{$q>0$} 
   & $u>0$ &  $\log_{q}u $ 
                                        \parbox{0pt} {\rule{0pt}{1.5ex+\baselineskip}} \\
   & $u<0$ & $\log_{q}|u| $ 
                                        \parbox{0pt}{\rule{0pt}{1.5ex+\baselineskip}} \\ \hline
$q<0$  & $u>0$ & $\displaystyle \frac{1}{2}\log_{|q|}u $
                                        \parbox{0pt}{\rule{0pt}{4ex+\baselineskip}}\\ 
\hline
\hline
\end{tabular}\vspace{0.35cm}
\caption{Explicit expressions for $\beta$ }
\label{table:silbeta}
\end{center}
\end{table}
\end{theorem}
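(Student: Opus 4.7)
The strategy is to combine Ward's representation $W_n = \sigma(nz;\Lambda)/\sigma(z;\Lambda)^{n^2}$ from Theorem~\ref{thm=ward} with the Jacobi product expansion of $\sigma$ in Tate coordinates, then count signs factor by factor.

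After fixing a basis $\omega_1,\omega_2$ of $\Lambda$ compatible with the real structure of $(E,P)$ and introducing Tate coordinates $q = e^{2\pi i \omega_2/\omega_1}$ and $u = e^{2\pi i z/\omega_1}$, the Weierstrass $\sigma$-function admits an expansion
\begin{equation*}
\sigma(z;\Lambda) \;=\; A\,e^{\eta_1 z^2/\omega_1}\,\bigl(u^{1/2} - u^{-1/2}\bigr)\prod_{k\geq 1}\frac{(1 - q^k u)(1 - q^k u^{-1})}{(1 - q^k)^2}
\end{equation*}
for constants $A$ and $\eta_1$ depending only on $\Lambda$. Substituting into Ward's formula, the quasi-periodic exponential factor $e^{\eta_1 z^2/\omega_1}$ cancels because it appears to the power $n^2$ in both numerator and denominator, yielding
\begin{equation*}
W_n \;=\; C_n\cdot\frac{u^{n/2} - u^{-n/2}}{(u^{1/2} - u^{-1/2})^{n^2}}\prod_{k\geq 1}\frac{(1 - q^k u^n)(1 - q^k u^{-n})}{(1 - q^k u)^{n^2}(1 - q^k u^{-1})^{n^2}}
\end{equation*}
for an explicit constant $C_n$; a global sign in $C_n$ can be absorbed by the optional replacement $W_n\mapsto(-1)^{n^2-1}W_n$ allowed by the theorem.

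The sign count then proceeds case by case according to Notation~\ref{Notation}. For $q>0$ and $u>0$, write $u = q^{\beta}$ with $\beta = \log_q u\in(0,1)$; unboundedness of $(W_n)$ forces $\beta$ irrational, since rational $\beta$ would make $P$ torsion and $(W_n)$ bounded. For $k\geq 1$ the factors $(1-q^k u)$, $(1-q^k u^{-1})$, and $(1-q^k u^n)$ are manifestly positive, while $1 - q^k u^{-n} = 1 - q^{k-n\beta}$ is negative precisely when $1\leq k\leq\lfloor n\beta\rfloor$. Combined with the sign $(-1)^{n^2+1}$ of $(u^{n/2} - u^{-n/2})/(u^{1/2} - u^{-1/2})^{n^2}$ (both differences being negative for $0<u<1$), this gives $\Parity[W_n]\equiv\lfloor n\beta\rfloor\pmod 2$ after the optional sign adjustment. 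The case $q>0$, $u<0$ runs along the same lines, except that $u^n$ now changes sign with the parity of $n$; carefully tracking the $(-1)^n$ in $u^n$ and the branch of $u^{n/2}$ (realized through the underlying $z$) produces exactly the split formula with the additional $n/2$ term for even $n$ and the closed expression $(n-1)/2$ for odd $n$. For $q<0$, one regroups the infinite product in consecutive pairs to re-express it in the positive parameter $q^2$; doing so doubles the effective base of the logarithm and accounts for the factor $\tfrac12$ in $\beta = \tfrac12\log_{|q|}u$.

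The main technical difficulty lies in the sign bookkeeping through the infinite product: choosing branches of $u^{1/2}$ consistently (most naturally via $e^{\pi i z/\omega_1}$), regrouping factors correctly in the $q<0$ case so that the result is manifestly real, and verifying that the sign of $C_n$ matches the global normalization after the allowed $(-1)^{n^2-1}$ flip.
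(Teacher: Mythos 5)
Your plan is correct and follows essentially the same route as the paper: the paper does not prove this theorem itself (it cites Silverman--Stephens), but its proof of the rank-$n$ generalization in Sections 2--4 is exactly your argument --- Tate parametrization, the $q$-expansion of $\sigma$ with the quasi-period factor cancelling, counting the negative factors $1-q^{k-n\beta}$ to get $\lfloor n\beta\rfloor$, the integer power $u^{\binom{n}{2}}$ (into which your half-integer powers combine) producing the $n/2$ and $(n-1)/2$ corrections when $u<0$, and splitting the product into even and odd powers of $q$ to reduce $q<0$ to the positive parameter $q^2$. The one step to make explicit is that $C_n=c^{\,n^2-1}$ for a fixed constant $c$ that is forced to be real (compare the paper's argument that $(2\pi i/\alpha)^2,(2\pi i/\alpha)^3\in\R^*$), so its sign genuinely has the form $(\pm 1)^{n^2-1}$ and is absorbed by the allowed replacement $W_n\mapsto(-1)^{n^2-1}W_n$.
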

In this paper we give a generalizations of Silverman-Stephens's theorem in the context of elliptic 
nets. 
\begin{definition}\label{NP}
Let $\Lambda \subset \C $ be a fixed lattice corresponding to an elliptic curve $E/\C.$ For an 
$n$-tuple $\mathbf{v}=(v_1,v_2,...,v_n) \in \Z^n,$ define a function $\Omega_{\mathbf{v}}$ (with 
respect to $\Lambda$) on $\C^n$ in variable $\mathbf{z}=(z_1,z_2,...,z_n)$ as follows:
\begin{equation}\label{rank_n_net} 
\Omega_{\mathbf{v}}(\mathbf{z};\Lambda)= (-1)^{\displaystyle {\sum_{i=1}^{n}v_i^2- \hspace{-0.7em} \sum_{1\leq i <j \leq n}\hspace{-0.7em}v_iv_j-1}}\hspace{-0.5em}\frac{\sigma(v_1z_1+v_2z_2+...+v_nz_n;\Lambda)}{\displaystyle \prod_{i=1}^{n}\sigma(z_i ;\Lambda)^{2v_i^2-\sum_{j=1}^n v_iv_j } \prod_{1\leq i <j \leq n}\sigma(z_i+z_j ;\Lambda)^{v_iv_j} },
\end{equation}
where $\sigma(z;\Lambda)$ is the Weierstrass $\sigma$-function.
\end{definition}
In the above definition, the complex points $z_{i}$ satisfy $P_{i} = (\wp(z_{i}), \wp'(z_{i})/2)$ for each 
$1 \leq i\leq n$ (under some embedding of $K \rightarrow \C$), where $\wp(z)$ is the Weierstrass 
$\wp$-function attached to $\Lambda$. Let ${\bf P}=(P_1, P_2, \ldots, P_n)$. 
In \cite[Theorem 3.7]{Stange} it is shown that if
$\mb{P} = (P_1,P_2, \ldots, P_n) $ is an $n$-tuple consisting of $n$ points in $E(\C)$ such that 
$P_i\neq \mathcal{O} $ for each $i$ and $P_i\pm P_j\neq \mathcal{O} $ for $1\leq i<j \leq n$, and 
$\mathbf{z}=(z_1,z_2,...,z_n)$ in $\C^n $ be such that each $z_i$ corresponds to $P_i$ under the 
isomorphism $\C/\Lambda \cong E(\C),$ then 
$\Omega_{\mathbf{v}}:=\Omega_{\mathbf{v}}(z;\Lambda)$ satisfies
the recursion 
\begin{equation} \label{npr}
\Omega_{\mathbf{p}+\mathbf{q}+\mathbf{s}}\Omega_{\mathbf{p}-\mathbf{q}}\Omega_{\mathbf{r+s}}
\Omega_{\mathbf{r}}+\Omega_{\mathbf{q+r+s}}\Omega_{\mathbf{q-r}}\Omega_{\mathbf{p+s}}
\Omega_{\mathbf{p}}+\Omega_{\mathbf{r+p+s}}\Omega_{\mathbf{r-p}}\Omega_{\mathbf{q+s}}
\Omega_{\mathbf{q}} = 0,
\end{equation} 
for all $\p, \q, \rr, \s \in \Z^n.$ 
In \cite{Stange}, Stange generalized the concept of an elliptic sequence to an
 $n$-dimensional array, called an elliptic net. 

 \begin{definition} 
	 Let $A$ be
	 a free Abelian group of finite rank, and $R$ be an integral domain. Let
	 ${\mathbf 0}$ and $0$ be the additive identity elements of $A$ and $R$
	 respectively. An {elliptic net} is any map $W:A \rightarrow R$ for which
	 $W({\mathbf 0}) = 0$, and that satisfies 
	 \begin{multline} \label{net recurrence} 
		 W(\p+\q+\s)W(\p-\q)W(\rr+\s)W(\rr) \\ +
		 W(\q+\rr+\s)W(\q-\rr)W(\p+\s)W(\p) \\ + 
		 W(\rr+\p+\s)W(\rr-\p)W(\q+\s)W(\q) = 0, 
	 \end{multline} 
	 for all $\p,\q,\rr,\s \in A.$ We identify the {rank}
	 of $W$ with the rank of $A.$  
 \end{definition}
 Note that for $A=\mathbb{Z}$, $\s=0$, $\rr=1$, and $W(1)=1$ the recursion \eqref{net recurrence} reduces to \eqref{EDS1}. Thus elliptic nets are generalizations of elliptic sequences. Moreover, in light of \eqref{npr} the function 
\begin{align*}
\Psi(\mb{P};E): \Z^n \quad &\longrightarrow \quad \C \\
                \mb{v}\quad &\longmapsto ~\Psi_{\mathbf{v}}(\mb{P};E)=\Omega_{\mb{v}}(\mb{z};\Lambda)
\end{align*}
is an elliptic net with values in $\C.$ 
Observe that $\Psi_{n{\bf e_i}} ({\bf P})=\psi_n(P_i)$, where $\mathbf{e_{i}}$ 
denotes the $i^{th}$ standard basis vector for $\Z^{n}.$


\begin{definition}\label{PsiPE}
The function $\Psi(\mb{P};E)$ is called the {elliptic net associated 
to $E$ (over $\C$) and $\mb{P}.$} The value $\Psi_{\mathbf{v}}(\mb{P};E)=\Omega_{\mb{v}} (\mb{z};\Lambda)$ is called the {$\mathbf{v}$-th net polynomial} 
associated to $E$ and $\mb{P}$. \end{definition}
 
We note that
if $P_1, P_2, \ldots, P_n$ are $n$ linearly independent points in $E(\mathbb{R})$ then by \cite[Theorem 7.4]{Stange} we have $\Psi_{\mathbf{v}}(\mb{P};E)\neq 0$ for $\mb{v}\neq \mb{0}$.
We prove the following generalization of Theorem \ref{thm=Sil-Ste} regarding the signs in
$\Psi(\mb{P},E)$.
\begin{theorem}\label{main_theorem_1}
Let $E$ be an elliptic curve defined over $\R$ and $\mathbf{P} = (P_1,P_2,\ldots, P_n)$ be an 
$n$-tuple consisting of $n$ linearly independent points in $E(\R).$ Let $\Lambda, q, z_i,$ 
and $u_i$ be as defined in Notation \ref{Notation}. Assume that $u_1,u_2, \ldots,u_n >0$ or there 
exists a non-negative integer $k$ such that $ u_1, u_2,\ldots, u_k <0 $ and $u_{k+1},u_{k+2},
\ldots, u_n >0.$ 
Then there are $n$ irrational numbers $\beta_1, \beta_2,\ldots,
\beta_n, $ which are $\Q$-linearly independent, given by rules similar to Table \ref{table:silbeta}, 
such that the parity of $~\Psi_{\mb{v}}(\mb{P};E)\left(=\Omega_{\mb{v}}(\mb{z};\Lambda )\right)$, 
possibly after replacing $~\Psi_{\mb{v}}(\mb{P};E)$ with 
{\tiny $(-1)^{ \displaystyle {\sum_{i=1}^{n} v_i^2-\hspace{-0.7em} \sum_{1\leq i <j \leq n} 
\hspace{-0.7em}v_iv_j-1}}$}\hspace{-0.7em} $\Psi_{\mb{v}}(\mb{P};E),$ is given by 
\begin{align} 
\Parity[\Psi_{\mb{v}}(\mb{P};E)] &\equiv
              \left\lfloor \sum_{i=1}^{n}v_i\beta_i \right\rfloor + \sum_{1\leq i <j\leq n} 
              \lfloor \beta_i+\beta_j  \rfloor v_iv_j \pmod 2 ,  
\label{eq=for_k=0} 
\end{align}
if all $u_i >0$, and

\begin{subequations}
    \begin{numcases} 
    {\Parity [\Psi_{\mb{v}}(\mb{P};E)] \equiv}
    \displaystyle \sum_{1\leq i<j\leq k}\lfloor \beta_i+\beta_j \rfloor v_iv_j \nonumber
                    + \sum_{k+1\leq i<j\leq n }\lfloor \beta_i+\beta_j \rfloor v_iv_j \\
      +\displaystyle \Big\lfloor \sum_{i=1}^{n}v_i\beta_i \Big\rfloor 
                              + \sum_{i=1}^{k} \Big\lfloor \frac{v_i}{2} \Big\rfloor 
         \pmod 2 & \hspace{-1cm} \text{if $ \displaystyle ~\sum_{i=1}^{k}v_i$ is even,}\label{eq=for_k>0a} \\  \nonumber \\ 
    \displaystyle \sum_{1\leq i<j\leq k} \lfloor \beta_i+\beta_j \rfloor v_iv_j \nonumber
                + \sum_{k+1\leq i<j\leq n} \lfloor \beta_i+\beta_j \rfloor v_iv_j\\ 
      +\displaystyle  \sum_{i=1}^{k} \Big\lfloor \frac{v_i}{2}\Big \rfloor 
           \pmod 2 &\hspace{-0.9cm} \text{if $\displaystyle ~\sum_{i=1}^{k}v_i$ is odd, }\label{eq=for_k>0b}
    \end{numcases}
\end{subequations}
if $ u_1, u_2,\ldots, u_k <0 $ and $u_{k+1},u_{k+2}, \ldots, u_n >0.$ 
\end{theorem}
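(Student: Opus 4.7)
The plan is to emulate the Silverman--Stephens strategy used to prove Theorem~\ref{thm=Sil-Ste}, but carried out in the multivariable setting of the net polynomial $\Omega_{\mathbf{v}}$. Since $E$ is defined over $\R$, one may choose a basis $\omega_{1},\omega_{2}$ of $\Lambda$ with $\omega_{1}\in\R_{>0}$ and $q=e^{2\pi i\omega_{2}/\omega_{1}}\in\R^{*}$; set $u_{i}=e^{2\pi i z_{i}/\omega_{1}}$, normalized as in Notation~\ref{Notation}. The key tool is the Tate/Jacobi factorization
\begin{equation*}
\sigma(z;\Lambda)=\frac{\omega_{1}}{2\pi i}\,\exp\!\Bigl(\frac{\eta_{1}z^{2}}{\omega_{1}}\Bigr)\bigl(u^{1/2}-u^{-1/2}\bigr)\prod_{k\ge 1}\frac{(1-q^{k}u)(1-q^{k}u^{-1})}{(1-q^{k})^{2}},
\end{equation*}
which lets me rewrite every $\sigma$-factor in \eqref{rank_n_net} as an explicit absolutely convergent infinite product.

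After substitution into Definition~\ref{NP}, the quadratic exponentials cancel by the algebraic identity
\begin{equation*}
\Bigl(\sum_{i}v_{i}z_{i}\Bigr)^{2}=\sum_{i}\Bigl(2v_{i}^{2}-\sum_{j}v_{i}v_{j}\Bigr)z_{i}^{2}+\sum_{i<j}v_{i}v_{j}(z_{i}+z_{j})^{2},
\end{equation*}
which is exactly the pattern of exponents built into \eqref{rank_n_net}. The leftover $\omega_{1}/(2\pi i)$ and $(1-q^{k})^{-2}$ prefactors combine to a $\mathbf{v}$-independent nonzero real constant, whose overall sign is compensated, if needed, by the $(-1)^{\sum v_{i}^{2}-\sum v_{i}v_{j}-1}$ factor that the theorem permits us to remove. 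After this allowed sign change, $\Omega_{\mathbf{v}}(\mathbf{z};\Lambda)$ is equal to a finite product of sine-like factors $(U^{1/2}-U^{-1/2})^{\pm 1}$ together with an absolutely convergent product of linear factors $(1-q^{k}U)^{\pm 1}$, where $U$ ranges over the monomials $u_{1}^{v_{1}}\cdots u_{n}^{v_{n}}$, each $u_{i}$, and each $u_{i}u_{j}$, with exponents prescribed by Definition~\ref{NP}.

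When $u_{1},\ldots,u_{n}>0$, the sign of each factor $1-q^{k}U$ is controlled by whether $q^{k}U\lessgtr 1$; a short count shows that the parity of the total number of negative linear factors in $\prod_{k\ge 1}(1-q^{k}U)^{\pm 1}$ equals $\lfloor\log_{|q|}U\rfloor$, with the $1/2$ adjustment of Table~\ref{table:silbeta} when $q<0$. The signs of the finitely many sine-like factors reduce to floor expressions by the same comparison. Summing the three types of contributions ($U=u_{1}^{v_{1}}\cdots u_{n}^{v_{n}}$, $U=u_{i}$, $U=u_{i}u_{j}$) yields the right-hand side of \eqref{eq=for_k=0}. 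The $\Q$-linear independence of $\beta_{1},\ldots,\beta_{n}$ follows from the hypothesized $\Z$-linear independence of $P_{1},\ldots,P_{n}$ via the isomorphism $\R^{*}/q^{\Z}\cong E(\R)$ of Notation~\ref{Notation}: any rational relation among the $\beta_{i}$ would, after exponentiation, produce an integer relation among the $u_{i}$ modulo $q^{\Z}$, hence among the $P_{i}$.

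In the mixed case $u_{1},\ldots,u_{k}<0$ and $u_{k+1},\ldots,u_{n}>0$ (which forces $q>0$ by the normalization of Notation~\ref{Notation}), two new phenomena appear. First, every factor $1-q^{\ell}u_{i}u_{j}$ with $i\le k<j$ has $u_{i}u_{j}<0$, so $q^{\ell}u_{i}u_{j}<0<1$ for all $\ell\ge 1$, and such factors are uniformly positive and contribute \emph{nothing}; this explains the absence of the cross-terms $\lfloor\beta_{i}+\beta_{j}\rfloor v_{i}v_{j}$ with $i\le k<j$ from the right-hand sides of \eqref{eq=for_k>0a}--\eqref{eq=for_k>0b}. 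Second, the sine-like factor $(U^{1/2}-U^{-1/2})^{\pm 1}$ attached to a negative $U$ requires a real branch of the square root, producing extra $(-1)^{v_{i}}$ contributions whose cumulative effect is the $\sum_{i\le k}\lfloor v_{i}/2\rfloor$ term, while the central monomial $U=u_{1}^{v_{1}}\cdots u_{n}^{v_{n}}$ has sign $(-1)^{\sum_{i\le k}v_{i}}$, forcing the even/odd split between \eqref{eq=for_k>0a} and \eqref{eq=for_k>0b}. The systematic device is to substitute $\tilde u_{i}=|u_{i}|$ for $i\le k$, re-express every factor in the $\tilde u_{i}$'s, and collect the resulting $(-1)$'s. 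The main obstacle will be executing this bookkeeping rigorously: the real-branch convention for $(U^{1/2}-U^{-1/2})^{v_{i}v_{j}}$ when $U<0$, the verification that all half-integer exponents cancel to produce genuine integer floors, and a case-by-case matching of the right-hand sides of \eqref{eq=for_k>0a}--\eqref{eq=for_k>0b} according to the parity of $\sum_{i\le k}v_{i}$ together demand careful combinatorial care.
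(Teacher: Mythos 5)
Your plan follows essentially the same route as the paper's proof: substitute the Tate $q$-product expansion of $\sigma$ into Definition \ref{NP}, observe that the quadratic exponential cancels, reduce to the parity of the factors $\theta(U,q)$ for $U=\prod_j u_j^{v_j}$, $u_i$, and $u_iu_j$ by counting the indices $m$ with a negative factor $1-q^mU^{\pm1}$ (with the $q<0$ case reduced to $q^2>0$), and derive the $\Q$-linear independence of the $\beta_i$ from the linear independence of the $P_i$ exactly as in Proposition \ref{three-one}, Corollaries \ref{corui}--\ref{coruij}, and the final assembly in \eqref{main_parity}. The only real difference is presentational: the paper keeps $\theta(w,q)=(1-w)\prod_m(\cdots)$ together with the separate integer-power prefactor $\prod_j u_j^{(v_j^2-v_j)/2}$ (from which $\sum_{i\le k}\lfloor v_i/2\rfloor$ drops out by reducing $v_i$ modulo $4$), thereby avoiding the square-root branch bookkeeping your $(U^{1/2}-U^{-1/2})$ normalization would force, and it \emph{deduces} that the leftover constant $2\pi i/\alpha$ is real from the reality of $\Omega_{\mb{v}}$ rather than fixing a real period $\omega_1$ at the outset (with $\omega_1\in\R_{>0}$ literally, the prefactor $(2\pi i/\omega_1)^{\sum v_i^2-\sum v_iv_j-1}$ would be imaginary for odd exponents, so that normalization needs the paper's adjustment).
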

Note that in the above theorem all $u_i>0$ is the same as $k=0$, which leads to $\sum_{i=1}^{k} v_i=0$ always being even. Thus
\eqref{eq=for_k>0a} for $k=0$ reduces to \eqref{eq=for_k=0}. The method of the proof of the above theorem follows closely the techniques devised in the proof of Theorem 1 of \cite{Sil1} for the case $n=1$, however the proof of Theorem \ref{main_theorem_1}
 involves analyzing more cases since the expression \eqref{rank_n_net}, for $n>1$, includes some new terms.

We also prove a generalization of Theorem \ref{thm=Sil-Ste} for sign of certain elliptic nets that are not necessarily given as values of net polynomials. In order to describe our result, we need to review some concepts from the theory of elliptic nets as developed in \cite{Stange}.

\begin{definition}
Let $W: \Z^n \longrightarrow R$ be an elliptic net. Let $\mathcal{B} = \{\mathbf{e_1,e_2,\ldots,e_n}
\}$ be the standard basis of $\Z^n.$ We say that $W$ is {non-degenerate} if  
$W(\mb{e_i}),~W(\mb{2e_i}) \neq 0 $ for all $1\leq i\leq n,$ and  $ W(\mb{e_i\pm e_j})\neq 0 $ for 
$1\leq i,j \leq n,~i\neq j.$ If $n=1$, we need an additional condition that 
$W(\mb{3e_i})\neq 0.$ If any of the above conditions is not satisfied we say that $W$ is 
{degenerate}. 
\end{definition}

\begin{definition}
Let $W: \Z^n \longrightarrow R$ be an elliptic net. Then we say that $W$ is {normalized} if 
$W(\mb{e_i}) =1$ for all $1\leq i\leq n$ and $W(\mb{e_i+e_j}) =1$ for all $1\leq i<j \leq n.$
\end{definition}
In \cite[Theorem 7.4]{Stange} a generalization of Theorem \ref{thm=ward} in the context of elliptic nets is given. More precisely it is proved that for a normalized and non-degenerate elliptic net $W: \Z^n \longrightarrow K $ there exists a cubic curve $E$ and a collection of points $\mathbf{P}$ on $E$ such that $W$ can be realized as an elliptic net associated to $E$ and $\mathbf{P}.$ (Theorem 7.4 of \cite{Stange} is also applicable to elliptic nets over a field $K$ that is not contained in $\mathbb{C}$.) We call $W$ \emph{non-singular} if $C$ in the curve point $(E, \mathbf{P})$ associated to $W$ is an elliptic curve. We also need the following concept for our second generalization of Theorem \ref{thm=Sil-Ste}.

\begin{definition}\label{QF}
A function 
$f:\mathbb{Z}^n\longrightarrow \mathbb{R}^*$ is called a {quadratic form} if 

\begin{equation}\label{parallelogram_law2}
f(\mb{a+b+c})f(\mb{a+b})^{-1}f(\mb{b+c})^{-1}f(\mb{c+a})^{-1}f(\mb{a})f(\mb{b})f(\mb{c}) = 1,
\end{equation}
for $\mb{a, b, c} \in \mathbb{Z}^n$.
 \end{definition}
 An example of a quadratic form is the function
    \begin{equation*}
 f(v_1,v_2,\ldots,v_n) = \prod_{i=1}^{n} p_i^{v_i^2}\prod_{1\leq i<j \leq  n}q_{ij}^{v_iv_j},
\end{equation*}
where $p_i, q_{ij}\in \mathbb{R}^*.$
 As we mentioned before, Theorem \ref{main_theorem_1} can be stated as a theorem for the sign of certain elliptic nets. Our next theorem establishes such a result for non-singular, non-degenerate elliptic nets. 

\begin{theorem}\label{main_theorem_2}
Let $W : \Z^n \longrightarrow \R $ be a non-singular, non-degenerate elliptic net. Assume that $W(\mb{v})\neq 0$ for $\mb{v}\neq \mb{0}$. Then, possibly 
after replacing $W(\mb{v})$ with either $g(\mb{v}) W(\mb{v})$  or $-g(\mb{v}) W(\mb{v})$ for a quadratic form $g : \Z^n 
\longrightarrow \R^*$, there are $n$ irrational numbers $\beta_1, \beta_2,\ldots ,\beta_n$, given by 
rules similar  to Table \ref{table:silbeta}, that can be calculated using an elliptic curve associated to $W$ and points 
on it, such that  
%
\begin{align}
\Parity[W(\mb{v})] &\equiv
  \left\lfloor \sum_{i=1}^{n}v_i\beta_i \right\rfloor \pmod 2 .  \label{eq=sign1} \\
\Parity[W(\mb{v})]&\equiv
\begin{cases}
    \displaystyle \Big\lfloor \sum_{i=1}^{n}v_i\beta_i \Big\rfloor + \sum_{i=1}^{k} \Big\lfloor \frac{v_i}{2} \Big\rfloor 
         \pmod 2 & \text{if $ \displaystyle ~\sum_{i=1}^{k}v_i$ is even } \\ 
\displaystyle ~ \sum_{i=1}^{k} \Big\lfloor \frac{v_i}{2}\Big \rfloor \pmod 2 & \text{if $\displaystyle ~\sum_{i=1}^{k}v_i$ is odd }, \label{eq=sign2}
\end{cases}
\end{align} 
where \eqref{eq=sign1} is applicable when all $u_i>0$ and \eqref{eq=sign2} is applicable if $u_1, u_2, \ldots, u_k <0$ and $u_{k+1}, u_{k+2}, \ldots, u_n >0$ .
\end{theorem}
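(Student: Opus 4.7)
The plan is to reduce Theorem \ref{main_theorem_2} to Theorem \ref{main_theorem_1} by rescaling the given elliptic net by a quadratic form so that it becomes a net polynomial; the rescaling factor, together with the quadratic cross-terms appearing in the conclusion of Theorem \ref{main_theorem_1}, will collectively constitute the quadratic form $g$ of the statement. First, since $W$ is non-degenerate I would write down the explicit quadratic form $f_0(\mb{v}) = \prod_{i=1}^{n} p_i^{v_i^2} \prod_{1\leq i<j\leq n} q_{ij}^{v_iv_j}$ with $p_i = W(\mb{e_i})^{-1}$ and $q_{ij} = W(\mb{e_i})W(\mb{e_j})/W(\mb{e_i}+\mb{e_j})$, so that $f_0 W$ is a normalized elliptic net (multiplying an elliptic net by a quadratic form yields again an elliptic net). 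Stange's generalization of Ward's theorem, \cite[Theorem 7.4]{Stange}, then produces an elliptic curve $E/\R$ (here we use non-singularity of $W$) and an $n$-tuple $\mb{P} = (P_1,\ldots,P_n) \in E(\R)^n$ such that $f_0(\mb{v})\,W(\mb{v}) = \Psi_{\mb{v}}(\mb{P};E)$. The hypothesis $W(\mb{v})\neq 0$ for $\mb{v}\neq \mb{0}$ forces the same non-vanishing for $\Psi_{\mb{v}}(\mb{P};E)$, and hence, since $\sigma(v_1z_1+\cdots+v_nz_n;\Lambda)$ is a factor of $\Psi_{\mb{v}}(\mb{P};E)$, it forces $P_1,\ldots,P_n$ to be $\Z$-linearly independent in $E(\R)$, which is the standing hypothesis of Theorem \ref{main_theorem_1}.

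Having reached that setting, I would apply Theorem \ref{main_theorem_1} to $\Psi(\mb{P};E)$ with the irrational numbers $\beta_1,\ldots,\beta_n$ read off from $(E,\mb{P})$ via Table \ref{table:silbeta}, obtaining either \eqref{eq=for_k=0} when all $u_i>0$ or \eqref{eq=for_k>0a}--\eqref{eq=for_k>0b} in the mixed-sign case, possibly after multiplying $\Psi_{\mb{v}}$ by $(-1)^{\sum_i v_i^2 - \sum_{i<j} v_iv_j - 1}$. Using $\Psi_{\mb{v}}(\mb{P};E) = f_0(\mb{v})W(\mb{v})$ and $\Parity[f_0(\mb{v})] \equiv \sum_i v_i^2\, \Parity[p_i] + \sum_{i<j} v_iv_j\, \Parity[q_{ij}] \pmod 2$, I transfer the identity to $W(\mb{v})$. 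The quadratic-in-$\mb{v}$ contributions on the right-hand side are then (i) $\Parity[f_0(\mb{v})]$, (ii) the cross-term sums $\sum\lfloor \beta_i+\beta_j\rfloor v_iv_j$ (or their split form) from \eqref{eq=for_k=0}--\eqref{eq=for_k>0b}, and (iii) the optional parity $\sum_i v_i^2 - \sum_{i<j} v_iv_j - 1 \pmod 2$. Each of (i)--(iii) is, up to the constant $-1$ produced by the explicit $-1$ in (iii), of the form $\prod \tilde p_i^{v_i^2}\prod \tilde q_{ij}^{v_iv_j}$ with $\tilde p_i,\tilde q_{ij}\in \R^*$ and hence satisfies \eqref{parallelogram_law2}. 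Since products and pointwise inverses of quadratic forms are again quadratic forms, the three pieces combine into a single quadratic form $g:\Z^n\to\R^*$, and the residual global sign $-1$ is precisely what the statement permits via the dichotomy $g(\mb{v})W(\mb{v})$ versus $-g(\mb{v})W(\mb{v})$. After this absorption, only the non-quadratic terms $\lfloor\sum_i v_i\beta_i\rfloor$ and, in the mixed-sign case, $\sum_{i=1}^k \lfloor v_i/2\rfloor$ survive, giving \eqref{eq=sign1} or \eqref{eq=sign2} respectively.

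The main obstacle is the careful bookkeeping required to verify that every extraneous factor admits a quadratic-form presentation in the sense of Definition \ref{QF} -- in particular checking the parallelogram law for (i)--(iii) and ensuring the $\lfloor v_i/2\rfloor$ pieces, which are \emph{not} quadratic in $\mb{v}$, are genuinely left on the right-hand side of \eqref{eq=sign2} rather than being absorbed into $g$. A secondary technical concern is that the curve-point pair $(E,\mb{P})$ produced by Stange's theorem is unique only up to uni-homothety, so one has to check that the resulting $\beta_1,\ldots,\beta_n$ -- and hence the parity formulas \eqref{eq=sign1}--\eqref{eq=sign2} -- are invariant under this ambiguity, in the same way that the $\beta$ of Silverman--Stephens is well defined for an elliptic sequence.
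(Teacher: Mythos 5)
Your argument is correct, but it takes a genuinely different (and legitimate) route from the paper's. You derive Theorem \ref{main_theorem_2} as a corollary of Theorem \ref{main_theorem_1}: normalize $W$ by the explicit quadratic form $f_0$, invoke Stange's realization theorem to write $f_0W=\Psi(\mb{P};E)$, apply Theorem \ref{main_theorem_1}, and then observe that the cross terms $\sum_{i<j}\lfloor\beta_i+\beta_j\rfloor v_iv_j$, the parity of $f_0$, and the optional factor $(-1)^{\sum_i v_i^2-\sum_{i<j}v_iv_j-1}$ are all parities of product-type quadratic forms $\prod_i p_i^{v_i^2}\prod_{i<j}q_{ij}^{v_iv_j}$, up to one global constant $-1$ which is exactly what the dichotomy between $g(\mb{v})W(\mb{v})$ and $-g(\mb{v})W(\mb{v})$ absorbs (a constant $-1$ fails the parallelogram law \eqref{parallelogram_law2}, so it cannot be folded into $g$). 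The paper instead proves Theorem \ref{main_theorem_2} first and directly: it writes $W=f\cdot\Psi$, substitutes the $q$-expansion \eqref{main_rank_n}, shows $2\pi i/\alpha\in\R^*$ by evaluating at a few vectors, and folds the denominator factors $\theta(u_j,q)^{2v_j^2-\sum_k v_jv_k}$ and $\theta(u_ju_k,q)^{v_jv_k}$ themselves, together with $f$, into $g$, so that only Proposition \ref{three-one} and the elementary computation of $\Parity\big[\prod_i u_i^{(v_i^2-v_i)/2}\big]$ remain; Section 4 then reuses those ingredients to prove Theorem \ref{main_theorem_1}. The two routes rest on the same underlying computation, and yours buys economy by treating Theorem \ref{main_theorem_1} as a black box. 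Two points you should make explicit in a write-up: (i) the quadratic form $h=\prod q_{ij}^{v_iv_j}$ with $q_{ij}=(-1)^{\lfloor\beta_i+\beta_j\rfloor}$ (restricted to pairs with $i,j\leq k$ or $i,j>k$ in the mixed-sign case, and $q_{ij}=1$ otherwise) is the \emph{same} in both branches of \eqref{eq=for_k>0a}--\eqref{eq=for_k>0b}, so a single $g$ serves all $\mb{v}$; and (ii) the choice between $g$ and $-g$ is governed by the sign of $2\pi i/\alpha$ and is therefore uniform in $\mb{v}$. Your closing concern about uni-homothety is not needed: the theorem only asserts the existence of suitable $\beta_i$, not their canonical dependence on $W$.
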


Again note that for $k=0$  the formula \eqref{eq=sign2}
reduces to  \eqref{eq=sign1}.  Next we describe some applications of Theorem \ref{main_theorem_1} and Theorem \ref{main_theorem_2}.
\begin{definition}\label{def=ud_grank_modm}
For $\mathbf{v} = (v_1, v_2,  \ldots,  v_n)\in \N^n,$ let $(S(\mb{v}))$ be an $n$-dimensional array of integers. For $j\in \{0, 1, \ldots, m-1\}$ and  $m\geq 2$ denote
\begin{equation*}
C(m, j;V_1,V_2,\ldots,V_n) = 
\# \big\{  \mb{v}; ~1  \leq  v_i \leq  V_i \hspace{.5em} \mbox{for}~ 1 \leq i \leq n~and
                 ~S(\mb{v}) \equiv j \pmod m  \big\}.
\end{equation*}
The array $(S(\mb{v}))$ is said to be uniformly distributed mod $m$ if 
\begin{equation*}
\lim_{V_1,V_2,\ldots , V_n \rightarrow \infty} \frac{C(m, j;V_1,V_2,\ldots,V_n)}{V_1V_2\ldots V_n}
                                        =\frac{1}{m},
\end{equation*}
for $j = 0, 1, \ldots ,m-1.$
We say that the signs in an  $n$-dimensional array $S: \mathbb{Z}^n \rightarrow \mathbb{R}^*$ is {uniformly distributed} if the array $(\Parity [S(\mathbf{v})])$ is uniformly distributed mod $2.$
\end{definition}
Note that here the restriction to $\mathbf{v}\in \mathbb{N}^n$   is for the simplicity of presentation and similar results will hold for $\mathbf{v}\in \mathbb{Z}^n$. By employing formulas in Theorem \ref{main_theorem_1} and Theorem \ref{main_theorem_2} we establish the following result.
\begin{theorem}
\label{ud-sign}
Let $ \Psi(\mb{P};E)$ and $W(\mb{v})$ be as in Theorem \ref{main_theorem_1} and Theorem \ref{main_theorem_2}. Then the signs in $ \Psi(\mb{P};E)$ and $W(\mb{v})$ are uniformly distributed.

\end{theorem}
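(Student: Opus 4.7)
The plan is to reduce the uniform distribution assertion mod $2$ to showing that
$$\frac{1}{V_1\cdots V_n}\sum_{1\leq v_i\leq V_i}(-1)^{\Parity[S(\mathbf{v})]}\longrightarrow 0,$$
and then to exploit the parity formulas from Theorems~\ref{main_theorem_1}--\ref{main_theorem_2} after partitioning $\mathbf{v}$ into residue classes modulo $4$. The deterministic sign corrections appearing in those theorems, namely the factor $(-1)^{\sum v_i^2-\sum v_iv_j-1}$ in Theorem~\ref{main_theorem_1} and the replacement $W\mapsto\pm g(\mathbf{v})W$ with $g$ a quadratic form in Theorem~\ref{main_theorem_2}, modify $\Parity$ only by a function that is periodic of period $2$ in each $v_i$; such modifications preserve uniform distribution, so it suffices to analyze the parity expressions \eqref{eq=for_k=0}--\eqref{eq=for_k>0b} and \eqref{eq=sign1}--\eqref{eq=sign2} exactly as written.

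On each mod-$4$ residue class of $\mathbf{v}$, the cross terms $\sum_{i<j}\lfloor\beta_i+\beta_j\rfloor v_iv_j\pmod 2$ (which depend only on $\mathbf{v}\pmod 2$) and the corrections $\sum_{i=1}^k\lfloor v_i/2\rfloor\pmod 2$ (which depend only on $\mathbf{v}\pmod 4$) are constant, as is the dichotomy ``$\sum_{i=1}^{k}v_i$ even vs.\ odd'' that selects which formula applies. For classes in the \emph{even} branch, which covers \eqref{eq=for_k=0}, \eqref{eq=for_k>0a}, \eqref{eq=sign1}, and the first line of \eqref{eq=sign2}, the parity reduces to a constant plus $\lfloor\sum_i v_i\beta_i\rfloor\pmod 2$. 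Writing $v_i=4w_i+r_i$ converts this into a problem about the equidistribution of $\sum_i(2\beta_i)w_i\pmod 1$; since each $\beta_i$ is irrational (and indeed $\Q$-linearly independent with the others), Weyl's equidistribution criterion yields the required equidistribution as $\mathbf{w}$ ranges over its box. Because $x\mapsto(-1)^{\lfloor x\rfloor}$ is a mean-zero Riemann-integrable function of period $2$, the average of $(-1)^{\lfloor\sum_i v_i\beta_i\rfloor}$ over such a class tends to zero.

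The main obstacle is the \emph{odd} branch \eqref{eq=for_k>0b} and the second line of \eqref{eq=sign2}, where the transcendental floor term is absent and the parity is constant on each mod-$4$ class, so Weyl's theorem is unavailable. Here I would use the fact that the odd branch is non-empty only when $k\geq 1$, and invoke the involution $(v_1,v_2,\ldots,v_n)\mapsto(v_1+2,v_2,\ldots,v_n)$ on residue classes modulo $4$: it preserves $v_1\pmod 2$ (hence the odd-branch condition and every $v_iv_j\pmod 2$), and changes $\lfloor v_1/2\rfloor\pmod 2$ by exactly $1$, thereby flipping the total parity. Consequently the odd-branch mod-$4$ classes pair into parity-flipping pairs and, up to boundary discrepancies of size $O(V_2\cdots V_n)=o(V_1\cdots V_n)$, the contributions of paired classes cancel. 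Combining the even- and odd-branch estimates over all residue classes modulo $4$ then drives the full normalized average to zero, establishing the claimed uniform distribution both for $\Psi(\mathbf{P};E)$ and for $W(\mathbf{v})$.
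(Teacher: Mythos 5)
Your proposal is correct and its overall strategy coincides with the paper's: both decompose $\mathbf{v}$ into finitely many residue classes on which the cross terms $\lfloor\beta_i+\beta_j\rfloor v_iv_j$ and the branch condition are constant, apply Weyl equidistribution of $\sum_i v_i\beta_i$ (using irrationality of the $\beta_i$) to the branch containing the floor term, and treat the floor-free odd branch by a separate elementary argument. The technical execution differs in two places. First, you refine to classes modulo $4$ so that $\sum_{i\le k}\lfloor v_i/2\rfloor$ becomes constant on each class; the paper instead stays with the $2^n$ parity classes and absorbs this correction into the floor by rewriting $\lfloor\sum_i v_i\beta_i\rfloor+\sum_{i\le k}\lfloor v_i/2\rfloor$ as $\lfloor\sum_{i\le k}\lfloor v_i/2\rfloor(2\beta_i+1)+\sum_{i>k}\lfloor v_i/2\rfloor(2\beta_i)+\sum_i\eta_i\beta_i\rfloor$ and then invoking its Proposition \ref{thm=ud_grank_modm} and Corollary \ref{cor=ud}; your mod-$4$ refinement avoids that manipulation at the cost of more classes. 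Second, for the odd branch the paper simply asserts that $\bigl(\sum_{i\le k}\lfloor v_i/2\rfloor\bigr)$ is uniformly distributed mod $2$ and applies Corollary \ref{cor=ud}, whereas your involution $v_1\mapsto v_1+2$ pairing mod-$4$ classes with opposite constant parities is in effect a proof of that assertion, with the correct $o(V_1\cdots V_n)$ boundary bookkeeping. One small inaccuracy: the parity of a quadratic form $g$ need not have period $2$ in each $v_i$ --- e.g.\ $g(v)=(-1)^{v(v-1)/2}$ satisfies \eqref{parallelogram_law2} and its parity $\equiv\lfloor v/2\rfloor$ has period $4$ --- so your justification for discarding the correction $\pm g(\mathbf{v})$ is stated too strongly; this does not affect the even branch (where any mod-$4$-periodic correction is constant on your classes), though a period-$4$ diagonal term could in principle interfere with the odd-branch pairing. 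The paper sidesteps this entirely by proving uniform distribution only for the arrays defined by the right-hand sides of the parity formulas and addressing just the factor $(-1)^{\sum v_i^2-\sum v_iv_j-1}$ (whose parity genuinely depends only on $\mathbf{v}\bmod 2$) in a remark, so on this point your treatment is no less complete than the paper's.
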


In order to explain the second application of our results we first introduce the concept of a \emph{denominator 
net}.
Let $E/\Q$ be an elliptic curve given by a Weierstrass equation with integer coefficients. If $P \in 
E(\Q)$ is a non-torsion point (i.e., $nP \neq \mathcal{O}$ for any $n$) then we have that 
\begin{align*}
nP &= \left( \frac{A_{nP}}{D_{nP}^2}, ~\frac{B_{nP}}{D_{nP}^3} \right), 
\end{align*}
where $A_{nP}, B_{nP},$ and $D_{nP}>0$ are integers
(See \cite[Chapter III, Section 2]{Sil0}).
The sequence $(D_{nP})$ is called an \emph{elliptic denominator sequence} associated to the curve $E$ and 
the point $P.$ It can be shown that $(D_{nP})$ is a divisibility sequence. Several authors have 
studied the sequence $(D_{nP}).$ In fact, Shipsey \cite{Shipsey} has shown a way of assigning signs to the sequence $(D_{nP})$ so that the resulting sequence becomes an 
elliptic divisibility sequence (Note that $D_{nP}>0$ for all $n$ by our definition). More precisely, let $E$ be an elliptic curve given by
\begin{equation}\label{specialE}
y^2 + a_1xy + a_3y = x^3+a_2x^2 + a_4x, \qquad a_i \in \Z 
\end{equation}
with the condition that $\gcd(a_3,a_4)=1.$ Let $P=(0,0)$ be a point of infinite order
in $E(\Q).$ Let $(D_{nP})$ be the associated elliptic denominator sequence. Let $(W_n)$ be an array
defined by the rule given as
\begin{equation*}
W_1 = 1, \qquad W_2 = a_3, \qquad |W_n| = D_{nP} ~~\mbox{for } n\geq 2.
\end{equation*}
Suppose we assign signs to the terms of $(W_n)$ by the rule
\begin{equation*}
\Sign[W_{n-2}W_n] = -\Sign[A_{(n-1)P}] ~~\mbox{for } n\geq 3.
\end{equation*}
Then in \cite[Section 4.4]{Shipsey} it is shown that $(W_n)$ will be an elliptic divisibility 
sequence. We observe that the condition on $E$ that $\gcd(a_3,a_4)=1$ is equivalent to that $P = 
(0,0)$ reduces to a non-singular point modulo any prime $\ell.$  It is also shown that if a curve is not of the form \eqref{specialE} 
then it is always possible to transform it into a curve of the form \eqref{specialE} (see
\cite[Chapter 5]{Shipsey}).

The concept of an elliptic denominator sequence has been generalized to higher ranks and it is called an 
\emph{elliptic denominator net}. If $\mathbf{P} =  (P_1,P_2,\ldots,P_n)$ is an $n$-tuple of 
linearly independent points in $E(\Q).$ Then for $\mathbf{v} = (v_1,v_2,\ldots,v_n )\in \Z^n $ we 
can write 
\begin{equation*}
\mathbf{v\cdot P} = v_1P_1+v_2P_2+\cdots+v_nP_n = 
\left( \frac{A_{\mb{v}\cdot P}}{D^2_{\mb{v}\cdot P}},\frac{B_{\mb{v}\cdot P}}{D^3_{\mb{v}\cdot P}} \right). 
\end{equation*}
Then $(D_{\mb{v}\cdot P})$ is called the \emph{elliptic denominator net} associated to an elliptic 
curve $E$ and a collection of points $\mathbf{P}.$  
As a consequence of Theorem \ref{main_theorem_1} and \cite[Proposition 1.7]{ABY}, our final result
describes how one can assign signs to a denominator net in order to obtain an elliptic net.
\begin{theorem}
\label{one-seven}
Let $E$ be an elliptic curve defined over $\Q$ given by the Weierstrass equation
\begin{equation*}
y^2 + a_1xy + a_3y = x^3+a_2x^2 + a_4x + a_6, \qquad a_i \in \Z. 
\end{equation*}
Let $\mathbf{P} =  (P_1,P_2,\ldots,P_n)$ be an $n$-tuple of linearly independent points in $E(\Q)$
so that each $P_i \pmod{\ell}$ is non-singular for every prime $\ell .$
Define a map $W :\Z^n \longrightarrow \Q $  as 
\begin{equation}\label{DvP}
W(\mb{v}) = (-1)^{\Parity[\Psi_{\mb{v}}(\mb{P};E)]} D_{\mb{v}\cdot P}, 
\end{equation}
where $\Psi(\mb{P};E)$ is the elliptic net associated to $E$ and the collection of points 
$\mb{P}.$ Then $W$ is an elliptic net.
\end{theorem}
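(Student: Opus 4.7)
The plan is to show that the adjusted denominator net $W(\mathbf{v})$ coincides on the nose with the net polynomial value $\Psi_{\mathbf{v}}(\mathbf{P};E)$ as a real number, after which the elliptic net property of $W$ is inherited from that of $\Psi(\mathbf{P};E)$ (Definition \ref{PsiPE} together with the recurrence \eqref{npr}).

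First I would invoke \cite[Proposition 1.7]{ABY}, which, under the standing hypothesis that each $P_i$ reduces to a non-singular point modulo every prime $\ell$, should yield the identity
$$|\Psi_{\mathbf{v}}(\mathbf{P};E)| = D_{\mathbf{v}\cdot P} \qquad \text{for every nonzero } \mathbf{v}\in\Z^n.$$
Heuristically this is the higher-rank analogue of the classical identity $|\psi_n(P)| = D_{nP}$, with everywhere-nonsingular reduction ensuring that no factor in the product defining \eqref{rank_n_net} can pick up a unit contribution at any prime. Granting this, the definition \eqref{DvP} unpacks as
$$W(\mathbf{v}) = \Sign[\Psi_{\mathbf{v}}(\mathbf{P};E)]\cdot|\Psi_{\mathbf{v}}(\mathbf{P};E)| = \Psi_{\mathbf{v}}(\mathbf{P};E),$$
and since $\Psi(\mathbf{P};E)$ satisfies \eqref{npr} (which is exactly \eqref{net recurrence} in the case $A=\Z^n$), the map $W$ is an elliptic net.

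The role of Theorem \ref{main_theorem_1} in the statement is not to enter the argument above, but to render it effective: the sign $(-1)^{\Parity[\Psi_{\mathbf{v}}(\mathbf{P};E)]}$ is a priori defined through complex $\sigma$-function values, and Theorem \ref{main_theorem_1} reinterprets this parity as an explicit floor-function expression in the real invariants $\beta_1,\ldots,\beta_n$ attached to $E$ and $\mathbf{P}$. Thus in practice $W$ is built from the integer net $(D_{\mathbf{v}\cdot P})$ by a sign that is computable from $q$ and the $u_i$ alone, which is the entire point of assembling the theorem as a consequence of Theorem \ref{main_theorem_1} and \cite[Proposition 1.7]{ABY}.

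The main obstacle is pinning down the precise content of \cite[Proposition 1.7]{ABY}. Should it deliver only $|\Psi_{\mathbf{v}}(\mathbf{P};E)| = g(\mathbf{v})D_{\mathbf{v}\cdot P}$ for some quadratic form $g:\Z^n\to \R^*$ rather than equality on the nose, the argument adapts by absorbing $g$ into the sign normalization via the standard observation that multiplication of an elliptic net by a quadratic form again yields an elliptic net: each of the three additive blocks in \eqref{net recurrence} carries the same total quadratic weight in every index, so the $g$-factors cancel term by term (exactly the mechanism used around Definition \ref{QF} and Theorem \ref{main_theorem_2}). Under the hypothesis that each $P_i$ is everywhere nonsingular one expects $g\equiv 1$, in which case no correction is needed and the identification $W=\Psi(\mathbf{P};E)$ holds verbatim.
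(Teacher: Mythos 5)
Your proposal is correct and follows essentially the same route as the paper: identify $W(\mb{v})$ with a quadratic-form multiple of $\Psi_{\mb{v}}(\mb{P};E)$ via \cite[Proposition 1.7]{ABY}, then invoke closure of elliptic nets under multiplication by quadratic forms. One correction: your primary reading of that proposition is too optimistic --- it gives $D_{\mb{v}\cdot P}=|F_{\mb{v}}(\mb{P})\,\Psi_{\mb{v}}(\mb{P};E)|$ for the explicit positive quadratic form $F_{\mb{v}}(\mb{P})$ of \eqref{Q_F} (which is not identically $1$ even under everywhere-nonsingular reduction, since the $D_{P_i}$ and $D_{P_i+P_j}$ need not equal $1$), so it is your ``fallback'' branch, with positivity of the form guaranteeing $\Parity[\Psi_{\mb{v}}(\mb{P};E)]=\Parity[F_{\mb{v}}(\mb{P})\Psi_{\mb{v}}(\mb{P};E)]$ and hence $W(\mb{v})=F_{\mb{v}}(\mb{P})\Psi_{\mb{v}}(\mb{P};E)$, that is the actual proof.
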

Note that for rank one elliptic nets the above theorem gives an alternative method, different from Shipsey's, 
in generating elliptic sequences out of elliptic denominator sequences.  

In the next section we will  review preliminaries needed in the proofs and in Sections 3 and 4 we  prove our main results on the signs in elliptic nets. In Section 5 we illustrate our results by providing several examples. Finally in Sections 6 and 7 we give proofs of our results on uniform distribution of signs and on relation with denominator sequences. 


\section{Preliminaries}

We will follow the conventions described in Notation \ref{Notation}. We first show that the claimed normalization in Notation \ref{Notation} is possible. 
\begin{lemma}\label{normalize_u}
Let $ q\in \R$ be such that $0<|q|<1$ and $ u_0\in \R^{>0}\setminus q^{\mathbb{Z}}$. Then we have the following statements.
\begin{enumerate}[(i)]
\item For $0<q<1$ there exists an integer $k$ such that $0<q<q^ku_0<1.$
\item For $-1<q<0$ there exists an integer $k$ such that $0<q^2<q^ku_0<1.$
\end{enumerate}
\end{lemma}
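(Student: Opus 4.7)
The plan is to reduce both parts to a single elementary fact: an open real interval of length exactly $1$ contains an integer provided its endpoints are not themselves integers. The hypothesis $u_0 \notin q^{\Z}$ will supply precisely this non-integrality in each case, after a logarithmic change of variable.

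For part (i), with $0<q<1$, I take the natural logarithm of the target inequality $q < q^k u_0 < 1$ and divide through by $\log q$. Since $\log q<0$, the inequalities reverse and one obtains
\[
-\log_q u_0 \;<\; k \;<\; 1-\log_q u_0,
\]
an open interval of length $1$. Its endpoints are non-integers exactly when $\log_q u_0 \notin \Z$, i.e., when $u_0 \notin q^{\Z}$, which is the standing hypothesis. A suitable integer $k$ therefore exists.

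For part (ii), with $-1<q<0$, positivity of $q^k u_0$ (forced by $u_0>0$ and the desired bounds $0<q^2<q^k u_0<1$) requires $k$ to be even. Writing $k=2j$ converts $q^2 < q^k u_0 < 1$ into $q^2 < (q^2)^j u_0 < 1$, which is exactly the inequality of part (i) applied to the new base $q' := q^2 \in (0,1)$. The only point to verify is that the assumption $u_0 \notin q^{\Z}$ still rules out $u_0 \in (q')^{\Z}=q^{2\Z}$; this is immediate since $q^{2\Z}\subset q^{\Z}$. Part (i) then produces the desired $j$, and $k=2j$ is the required integer.

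The main obstacle, minor as it is, is bookkeeping: one must keep track of the sign reversal when dividing by $\log q$ and confirm that the open (not closed) interval of length $1$ really does contain an integer under the non-integrality hypothesis. Once these points are in place, both statements follow at once.
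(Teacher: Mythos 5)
Your proof is correct and is essentially the paper's argument in logarithmic clothing: the paper's part (i) picks the minimal $k$ with $q^k u_0 < 1$ (which is exactly the integer $\lceil -\log_q u_0\rceil$ your interval argument produces, with $u_0\notin q^{\Z}$ guaranteeing strictness), and its part (ii) is the same reduction to base $q^2$ that you make, with your observation $q^{2\Z}\subset q^{\Z}$ handling the hypothesis transfer. No gaps.
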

\begin{proof}
(i)  Let $k_0= \min \{k \in \Z ~|~q^ku_0 <1\}.$ Then $q^{k_0}u_0<1$ and $q^{k_0-1}u_0>1.$ We claim that
      $q<q^{k_0}u_0<1.$ Clearly $q^{k_0}u_0<1.$ If $q^{k_0}u_0\leq q$ then $q^{k_0-1}u_0\leq 1$ which 
      contradicts the minimality of $k_0.$ So the claim holds.
      
(ii) If $-1<q<0,$ then $0<q^2<1,$ so the result follows from part (i). 
\end{proof} 
\noindent Thus, letting $u=q^ku_0$ in the above lemma will result in the desired normalization.

Let 
$\Lambda_{\tau} $ be the normalized lattice with basis $[\tau, 1],$ where $\tau 
$ is in the upper half-plane. From \cite[Chapter I, Theorem 6.4]{Sil3} we know that, the $q$-expansion of the $\sigma$-function 
$\sigma(z;\Lambda_{\tau} ) $ is given by 
\begin{equation}\label{sigma_q_expansion_tau}
\sigma(z;\Lambda_{\tau} )=-\frac{1}{2\pi i} e^{\frac{1}{2} z^{2}\eta-\pi iz}(1-w)\prod_{m\geq 1} 
\frac{(1-q^mw)(1-q^mw^{-1})}{(1-q^m)^2},
\end{equation}
where $w = e^{2\pi i z}, ~q = e^{2\pi i \tau},$ and $\eta $ is the quasi-period homomorphism. 
The next proposition gives the $q$-expansion for the numerator in the expression for 
$\Omega_{\mathbf{v}}(\mathbf{z};\Lambda_{\tau} )$ in \eqref{rank_n_net}.
  
\begin{proposition}\label{ranknsigma}
Let $\mb{v}=(v_1,v_2,\dots , v_n) \in \Z^n $ and $\mb{z}=(z_1,z_2, \dots , z_n)\in \C^n.$
Let $w_j = e^{2\pi iz_j}$ for $j= 1,2,\dots n$ and $q=e^{2\pi i\tau}.$ Then 
\begin{equation}\label{rank_n_sigma} 
\sigma(\mb{v}\cdot \mb{z}; \Lambda_{\tau})
      =-\frac{1}{2\pi i} e^{\frac{1}{2}(\mb{v}\cdot \mb{z})^{2}\eta 
          -\pi i(\mb{v}\cdot \mb{z})}\Big(1- \prod_{j=1}^{n} w_j^{v_j}\Big)
          \prod_{m\geq 1}\frac{(1-q^m\prod_{j=1}^{n} w_j^{v_j})(1-q^m\prod_{j=1}^{n} 
          w_j^{-v_j})}{(1-q^m)^2},
\end{equation}
where $\mb{v}\cdot \mb{z} = v_1z_1+v_2z_2+...+v_nz_n.$
\end{proposition}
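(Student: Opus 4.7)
The proposition is an immediate specialization of the single-variable $q$-expansion \eqref{sigma_q_expansion_tau}, so the plan is to substitute $z := \mathbf{v}\cdot \mathbf{z} = \sum_{j=1}^{n} v_j z_j$ into that formula and rewrite the exponential $w = e^{2\pi i z}$ in terms of the coordinate variables $w_j = e^{2\pi i z_j}$. No deeper theory is needed.

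The key algebraic step is the factorization
\begin{equation*}
e^{2\pi i (\mathbf{v}\cdot \mathbf{z})} \;=\; \prod_{j=1}^{n} e^{2\pi i v_j z_j} \;=\; \prod_{j=1}^{n} w_j^{v_j},
\end{equation*}
together with its reciprocal $e^{-2\pi i (\mathbf{v}\cdot \mathbf{z})} = \prod_{j=1}^{n} w_j^{-v_j}$. Substituting these into \eqref{sigma_q_expansion_tau}, the linear prefactor $(1-w)$ becomes $\bigl(1 - \prod_{j=1}^{n} w_j^{v_j}\bigr)$, and each Euler-type factor $(1-q^m w)(1-q^m w^{-1})$ in the infinite product becomes the corresponding pair appearing in \eqref{rank_n_sigma}. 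The quadratic exponential factor transforms tautologically: $\tfrac{1}{2} z^2 \eta - \pi i z \longmapsto \tfrac{1}{2}(\mathbf{v}\cdot\mathbf{z})^2 \eta - \pi i (\mathbf{v}\cdot\mathbf{z})$. The nome $q = e^{2\pi i \tau}$ and the quasi-period $\eta$ depend only on the lattice $\Lambda_\tau$ and so are unchanged by the substitution, while the global constant $-\tfrac{1}{2\pi i}$ is carried over verbatim.

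There is essentially no obstacle to overcome. Formula \eqref{sigma_q_expansion_tau} holds for every $z \in \mathbb{C}$, so the substitution $z = \mathbf{v}\cdot\mathbf{z}$ is legitimate for any $\mathbf{v}\in\Z^n$ and $\mathbf{z}\in\C^n$; no convergence or branch issue is introduced. The content of the proposition is therefore notational rather than analytic: it repackages the one-variable Jacobi-type product so that each factor in the denominator of $\Omega_{\mathbf{v}}(\mathbf{z};\Lambda_\tau)$ (namely $\sigma(z_i;\Lambda_\tau)$ and $\sigma(z_i + z_j;\Lambda_\tau)$) can later be compared with the numerator factor from \eqref{rank_n_sigma} one exponential and one Euler factor at a time. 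This comparison is precisely what drives the sign analysis of $\Omega_{\mathbf{v}}$ in subsequent sections, which is why the formula is recorded here in this explicit form.
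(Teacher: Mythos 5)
Your proof is correct and is exactly the paper's argument: substitute $z = \mathbf{v}\cdot\mathbf{z}$ into the one-variable $q$-expansion \eqref{sigma_q_expansion_tau} and observe that $w = e^{2\pi i z}$ becomes $\prod_{j=1}^{n} w_j^{v_j}$. No further comment is needed.
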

\begin{proof}
The result is obtained by replacing $z$ with $\mb{v}\cdot \mb{z} =  
v_1z_1+v_2z_2+ +\cdots v_nz_n $ in \eqref{sigma_q_expansion_tau}. Observe that the map 
$z \longmapsto  v_1z_1+v_2z_2+ +\cdots v_nz_n,$ corresponds to $w \longmapsto \prod_{j=1}^{n}
w_j^{v_j} .$
\end{proof}

The next proposition provides a $q$-expansion for $\Omega_{\mb{v}}(\mb{z};\Lambda_{\tau})$ defined in Definition \ref{NP}.
\begin{proposition}\label{Omega_in_theta}
Let $\mb{v}=(v_1,v_2,\dots , v_n) \in \Z^n $ and $\mb{z}=(z_1,z_2, \dots , z_n)\in \C^n.$
Let $w_j = e^{2\pi iz_j}$ for $j= 1,2,\dots n$ and $q=e^{2\pi i\tau}.$  
Then we have
\begin{equation*}
\Omega_{\mb{v}}(\mb{z};\Lambda_{\tau})= ({2\pi i})^{\displaystyle{ \sum_{j=1}^{n}v_j^2-
\hspace{-0.7em} \sum_{1\leq j <k \leq n}\hspace{-0.7em}v_jv_k-1}}\prod_{j=1}^{n} w_j^{
 \frac{v_j^2-v_j}{2}}\frac{\displaystyle \mathlarger{\theta}\Big(\prod_{j=1}^{n} w_j^{v_j},q \Big)}
{\displaystyle \prod_{j=1}^{n} \theta(w_j, q)^{2v_j^2-\sum_{k=1}^n v_jv_k }\hspace{-0.7em}
\prod_{1\leq j<k \leq n}\hspace{-0.7em}\theta(w_jw_k, q)^{v_jv_k} },
\end{equation*}
 where 
\begin{align*}
\theta(w_j,q) &= (1-w_j)\prod_{m\geq 1} \frac{(1-q^mw_j)(1-q^mw_j^{-1})}{(1-q^m)^2}.
\end{align*}
\end{proposition}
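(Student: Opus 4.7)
The plan is to substitute the $q$-expansions directly into the defining formula \eqref{rank_n_net} of $\Omega_{\mb{v}}$: use Proposition~\ref{ranknsigma} for the numerator $\sigma(\mb{v}\cdot\mb{z};\Lambda_\tau)$, and use \eqref{sigma_q_expansion_tau} (with $z$ replaced by $z_i$ and by $z_i+z_j$ respectively) for each factor in the denominator. It is cleanest to split the resulting expression into four independent pieces and simplify them separately: (a) the constants $-1/(2\pi i)$, (b) the Gaussian factors $e^{\frac{1}{2}(\,\cdot\,)^2\eta}$, (c) the linear factors $e^{-\pi i(\,\cdot\,)}$, and (d) the $\theta$-series factors.

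For piece (d), the $\theta$-series land exactly in the target positions with no algebra required: one $\theta(\prod_j w_j^{v_j},q)$ in the numerator, and $\theta(w_j,q)^{2v_j^2-\sum_k v_jv_k}$ together with $\theta(w_jw_k,q)^{v_jv_k}$ in the denominator, matching the claim. For piece (a), an exponent count shows that the denominator contributes $(-1/(2\pi i))^{\sum_j v_j^2-\sum_{j<k}v_jv_k}$ after using $\sum_j\sum_k v_jv_k=\sum_j v_j^2+2\sum_{j<k}v_jv_k$; combining with the single numerator factor and with the prescribed sign $(-1)^{\sum_j v_j^2-\sum_{j<k}v_jv_k-1}$ from Definition~\ref{NP} collapses everything to exactly $(2\pi i)^{\sum_j v_j^2-\sum_{j<k}v_jv_k-1}$, as required.

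The main bookkeeping is in (b) and (c), and this is where I expect the only real risk of error. For (b), expand $(\mb{v}\cdot\mb{z})^2$ in the numerator exponent and each $(z_j+z_k)^2$ in the denominator. The cross terms $z_jz_k$ for $j\neq k$ contribute $\eta\,v_jv_k z_jz_k$ on both sides and hence cancel. For the $z_j^2$ coefficient, the denominator contribution is $\tfrac{\eta}{2}\bigl(2v_j^2-\sum_k v_jv_k+\sum_{k\neq j}v_jv_k\bigr)=\tfrac{\eta}{2}\,v_j^2$, matching the numerator, so every $\eta$-term disappears. The analogous computation in (c) gives a denominator $z_j$-coefficient equal to $-\pi i\,v_j^2$ against a numerator coefficient $-\pi i\,v_j$, so the surviving exponential is $\prod_j e^{\pi i(v_j^2-v_j)z_j}=\prod_j w_j^{(v_j^2-v_j)/2}$ once we use $w_j=e^{2\pi iz_j}$.

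Assembling the four simplified pieces yields the formula in the statement. The only delicate point is correctly accounting for the fact that each pair $\{j,k\}$ feeds contributions into both the $z_j^2$ and the $z_k^2$ (and $z_j$, $z_k$) coefficients through $\sigma(z_j+z_k;\Lambda_\tau)$; once this double-counting is handled, the collapse in (b) and (c) is automatic and the proof is complete.
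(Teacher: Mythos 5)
Your proposal is correct and is exactly the computation the paper has in mind: its proof simply says to substitute the $q$-expansions \eqref{sigma_q_expansion_tau} and \eqref{rank_n_sigma} into \eqref{rank_n_net} and simplify, noting that the quasi-period $\eta$ drops out. Your four-piece bookkeeping (constants, $\eta$-quadratic terms, linear terms, $\theta$-factors) checks out — in particular the exponent count $\sum_j(2v_j^2-\sum_k v_jv_k)+\sum_{j<k}v_jv_k=\sum_j v_j^2-\sum_{j<k}v_jv_k$ and the $z_j^2$-coefficient collapse to $\tfrac{\eta}{2}v_j^2$ are exactly right — so your write-up is just a more explicit version of the paper's argument.
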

\begin{proof}
The proof is computational and follows by substituting the $q$-expansions 
\eqref{sigma_q_expansion_tau} and \eqref{rank_n_sigma} in \eqref{rank_n_net}. The one thing to 
note is that the product expansion of $\Omega_{\mb{v}}(\mb{z};\Lambda_{\tau})$ is independent of 
$\eta,$ the quasi-period homomorphism. It disappears after substituting the $q$-expansions and simplifying the terms. 
\end{proof}

For $q = e^{2 \pi i \tau}$ with $\tau$ in the upper half-plane, let $E_q$ be the elliptic curve defined as 
\begin{equation*}
E_q : y^2 + xy = x^3+a_4(q)x + a_6(q), 
\end{equation*}\vspace{-1em}
where 

\begin{align*}
a_4(q) &= -5\sum_{n\geq 1}\frac{n^3q^n}{1-q^n} \\
\end{align*}
and
\begin{align*}
a_6(q) &= -\frac{5}{12}\sum_{n\geq 1}\frac{n^3q^n}{1-q^n}-\frac{7}{12}\sum_{n\geq 1}
                                                                \frac{n^5q^n}{1-q^n}.
\end{align*}
Let 
\begin{align}\label{Canaiso}
\phi : \C^*/q^{\Z} &\overset{\sim} \longrightarrow E_q(\C) 
\end{align}
be the $\C$-analytic isomorphism given in \cite[Chapter V, Theorem 1.1]{Sil3}.
We are only concerned with elliptic nets $\Psi(\mb{P};E)$ with values in $\R.$ By 
\cite[Theorem 4.4]{Stange} if $E$ is defined over $\R,$ then we have $\Psi_{\mb{v}}(\mb{P};E)\in \R$ 
for any $\mb{v} \in \Z^n .$ So from now on we assume that our elliptic curves are defined over 
$\R.$
The following theorem will play an important role in our investigations.
\begin{theorem}\label{R_Isomorphism}
Let $E/\R$ be an elliptic curve. Then the following assertions hold. 
\begin{enumerate}[(a)]
\item There is a unique $q\in \R$ with $0<|q|<1$ such that $$E \cong_{/\R } E_q $$
    (i.e., $E$ is $\R$-isomorphic to $E_q$).  
\item The composition of the isomorphism in part (a) with the isomorphism $\phi$ defined in 
\eqref{Canaiso}, yields an isomorphism  
      \begin{equation*}
      \psi : \C^*/q^{\Z} \overset{\sim} \longrightarrow E(\C)
      \end{equation*}
      which commutes with complex conjugation. Thus $\psi$ is defined over $\R$ and      
      moreover,
      \begin{equation*}
      \psi : \R^*/q^{\Z} \overset{\sim} \longrightarrow E(\R)
      \end{equation*}
      is an $\R$-analytic isomorphism. 
\end{enumerate}
\end{theorem}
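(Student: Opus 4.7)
My plan is to prove this theorem by establishing part (a) via the lattice uniformization of real elliptic curves, then deducing part (b) from the explicit real-analyticity of the Tate parametrization $\phi$ in \eqref{Canaiso}.

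For part (a), I would begin with the period lattice $\Lambda \subset \C$ of $E$, so that $\C/\Lambda \xrightarrow{\sim} E(\C)$ via $z \mapsto (\wp(z;\Lambda), \tfrac{1}{2}\wp'(z;\Lambda))$; because $E$ is defined over $\R$, we may take $\bar\Lambda = \Lambda$, and this uniformization then commutes with complex conjugation. The intersection $\Lambda \cap \R$ is a rank-one $\Z$-submodule, so I pick a positive real generator $\omega_1$ and any second generator $\omega_2$ of $\Lambda$. Stability of $\Lambda$ under conjugation together with $\omega_2 \notin \R$ forces $\bar\omega_2 = m\omega_1 - \omega_2$ for some $m \in \Z$; replacing $\omega_2$ by $\omega_2 + k\omega_1$ shifts $m$ by $2k$, so I can normalize $m \in \{0,1\}$. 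Scaling by the real number $1/\omega_1$ (an $\R$-equivariant automorphism of $\C$) then brings $\Lambda$ to the standard form $\Z + \Z\tau$ with $\tau = iy$ (when $m=0$, the rectangular case) or $\tau = \tfrac{1}{2} + iy$ (when $m=1$, the rhombic case), for a uniquely determined $y > 0$. Setting $q = e^{2\pi i\tau}$ yields $q = e^{-2\pi y} \in (0,1)$ or $q = -e^{-2\pi y} \in (-1,0)$, respectively, so $q \in \R$ with $0 < |q| < 1$ and is uniquely determined by $E$. Converting from the Weierstrass data $(g_2(\Lambda), g_3(\Lambda))$ to the Tate coefficients $(a_4(q), a_6(q))$ via the standard rational change of variables (which is real when $\tau$ is real-type), I combine everything to produce the desired $\R$-isomorphism $E \cong_{/\R} E_q$.

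For part (b), I invoke the fact that $\phi : \C^*/q^\Z \xrightarrow{\sim} E_q(\C)$ is given by explicit $q$-series with integer coefficients (Silverman, \emph{Advanced Topics in the Arithmetic of Elliptic Curves}, Chapter V), so for $q \in \R$ one has $\phi(\bar w) = \overline{\phi(w)}$, i.e., $\phi$ commutes with complex conjugation. Composing with the $\R$-isomorphism $E_q \cong E$ produced in part (a) yields a Galois-equivariant isomorphism $\psi : \C^*/q^\Z \to E(\C)$, which is therefore defined over $\R$. Restricting to the fixed loci of complex conjugation on both sides gives the desired $\R$-analytic isomorphism $\R^*/q^\Z \xrightarrow{\sim} E(\R)$.

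The main subtlety will be identifying the fixed points of $w \mapsto \bar w$ on $\C^*/q^\Z$ with $\R^*/q^\Z$, particularly when $q < 0$. Writing $w = re^{i\theta}$, the fixed-point condition $\bar w = w q^n$ translates to $e^{-2i\theta} = q^n$; since $|e^{-2i\theta}| = 1$ while $|q^n| = |q|^n \neq 1$ for $n \neq 0$, one forces $n = 0$ and hence $\theta \in \pi\Z$, so $w \in \R^*$. This rules out spurious fixed points from the imaginary axis in the $q < 0$ case, and it also explains topologically why $\R^*/q^\Z$ is connected for $q < 0$ (multiplication by the negative number $q$ swaps $\R^{>0}$ and $\R^{<0}$), consistent with the single connected component of $E(\R)$ when $\Delta(E) < 0$.
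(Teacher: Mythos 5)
The paper does not actually prove this theorem: its ``proof'' is the single line ``See \cite[Chapter V, Theorem 2.3]{Sil3}.'' Your argument is, in substance, a correct reconstruction of that cited textbook proof, so you have supplied the content the paper outsources. Part (a) via a conjugation-stable period lattice, the computation $\bar\omega_2=m\omega_1-\omega_2$ with $m$ normalized to $\{0,1\}$, and the resulting dichotomy $\tau=iy$ versus $\tau=\tfrac12+iy$ giving $q=e^{-2\pi y}>0$ or $q=-e^{-2\pi y}<0$ is exactly the standard route; part (b) via the rationality of the Tate $q$-series and the fixed-point computation $\bar w\equiv w \pmod{q^{\Z}}\Rightarrow |q|^n=1\Rightarrow n=0\Rightarrow w\in\R^*$ is likewise the standard (and complete) argument, and you correctly handle the $q<0$ case where $\R^*/q^{\Z}$ is connected.

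The one place your sketch is thinner than it should be is the \emph{uniqueness} of $q$ in part (a). Your construction produces a canonical $q$ from $\Lambda$, but the theorem asserts that no other $q'$ works, and this is not automatic: for a fixed $j$-invariant there are in general two real parameters with $j(q)=j(q')$ (for instance $q=e^{-2\pi y}$ and $q'=e^{-2\pi/y}$, corresponding to $\tau$ and $-1/\tau$), and the curves $E_q$, $E_{q'}$ are quadratic twists of one another, hence $\C$-isomorphic but not $\R$-isomorphic. To close this you should note that any isomorphism $E_{q'}\to E$ defined over $\R$ lifts to multiplication by a scalar $\alpha$ with $\alpha\Lambda_{\tau'}=\Lambda$ that commutes with $z\mapsto\bar z$, forcing $\alpha\in\R^*$; a real rescaling must carry the positive generator of $\Lambda_{\tau'}\cap\R$ to that of $\Lambda\cap\R$ and therefore reproduces your normalized $\tau$, whereas the competing parameter $q'$ arises only from a purely imaginary $\alpha$, which is not Galois-equivariant. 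With that observation added, the proof is complete.
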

\begin{proof}
See \cite[Chapter V, Theorem 2.3]{Sil3}.
\end{proof}

Since $E$ is defined over $\R,$ by Theorem \ref{R_Isomorphism}, there is a unique real number 
$q\in \R$ with $0<|q|<1$ such that $E \cong_{/\R } E_q.$ Assume that $\pi: E_q \rightarrow E$ represents this 
isomorphism.
Let $\tau$ be  a complex number associated to $q$ given in 
Theorem \ref{R_Isomorphism} such that $q = e^{2 \pi i \tau}$ and let $\Lambda_{\tau}$ be the lattice 
generated by $[\tau, 1].$ Since $E \cong E_q ,$ there exists an $\alpha \in \C^*$ such that $\Lambda 
= \alpha\Lambda_{\tau}$, where $\Lambda$ is the lattice associated with $E$.  The multiplication by $\alpha$ carries $\C/\Lambda $ isomorphically to 
$\C/\Lambda_{\tau}.$ Let $z_i$ be 
the corresponding complex number to $P_i\in E(\R)$ under the isomorphism $E(\C)\cong C/\Lambda.$ Then 
$z_i/\alpha $ is the corresponding complex number to $\pi^{-1}(P_i)\in E_q(\R)$ under the isomorphism 
$E_q(\C)\cong C/\Lambda_{\tau}.$ From part (b) of Theorem \ref{R_Isomorphism}, the map
\begin{equation*}
\psi = \pi  ~o ~\phi : \C^*/q^{\Z} \overset{\sim} \longrightarrow E_{q}(\C) \overset{\sim} \longrightarrow E(\C)
\end{equation*}
is an isomorphism, moreover the map $\psi$ (restricted to $\R^*/q^{\Z}$)
\begin{equation*}
\psi : \R^*/q^{\Z} \overset{\sim} \longrightarrow E_{q}(\R) \overset{\sim} \longrightarrow E(\R)
\end{equation*} is an $\R$-isomorphism. Thus from construction of $\psi,$ we can consider 
$u_i = e^{2 \pi i z_i/\alpha}$ as a representative in $\R^*/q^{\Z}$ for $\psi^{-1}(P_i).$
Since $\psi$ is an $\R$-isomorphism we have that $u_i\in \R^*. $

Next let  $ \Psi_{\mb{v}} (\mb{P},E) = \Omega_{\mb{v}}(\mb{z};\Lambda ) $ be the value of the 
$\mb{v}$-th net polynomial at $\mb{P}.$ Then for $ \mb{v} =(v_1,v_2,\dots,v_n)\in \Z^n,$ fixed 
$\mb{z}= (z_1,z_2,\dots,z_n)\in \C^n,$ and $\Lambda,$ we have 
\begin{equation*}\label{Omega_Omega}
\Omega_{\mb{v}}(\mb{z};\Lambda) = \Omega_{\mb{v}}(\mb{z};\alpha\Lambda_{\tau})
        = (\alpha^{-1})^{\displaystyle \sum_{i=1}^{n} v_i^2-\hspace{-0.7em}\sum_{1\leq i <j \leq n}\hspace{-0.7em} v_iv_j-1 }\Omega_{\mb{v}}(\mb{\alpha^{-1}z};\Lambda_{\tau}).
\end{equation*}
Here we have used the fact that  for a non-zero 
$\alpha \in \C^* $ we have $\sigma(\alpha z; \alpha \Lambda ) = \alpha\sigma(z;\Lambda )$. 
Now substituting the value of $\Omega_{\mb{v}}\left(\mb{\alpha^{-1}z};\Lambda_{\tau} \right)$ 
from Proposition \ref{Omega_in_theta} yields
\begin{equation}\label{main_rank_n} 
\Omega_{\mb{v}}(\mb{z};\Lambda)= 
           \left(\frac{2\pi i}{\alpha}\right)^{\displaystyle \sum_{j=1}^{n} v_j^2-
\hspace{-0.7em}\sum_{1\leq j <k \leq n}\hspace{-0.7em} v_jv_k-1}\prod_{j=1}^{n} u_j^{
\mathlarger{ \frac{v_j^2-v_j}{2}}}\frac{\displaystyle 
\mathlarger{\theta}\Big(\prod_{j=1}^{n} u_j^{v_j},q \Big)}{\displaystyle \prod_{j=1}^{n}
\theta(u_j, q)^{2v_j^2-\sum_{k=1}^n v_jv_k } \hspace{-0.7em}\prod_{1\leq j <k \leq n}\hspace{-0.7em}\theta(u_ju_k, q)^{v_jv_k} },
\end{equation}
where
\begin{align*}
\theta(u_j,q) &= (1-u_j)\prod_{m\geq 1} \frac{(1-q^mu_j)(1-q^mu_j^{-1})}{(1-q^m)^2}      
                                                                        \label{theta_u_i}.
                                                                        \end{align*}

 In the following two sections, we compute the parity of terms in the right hand side of \eqref{main_rank_n}. 

%

\section{Proof of Theorem \ref{main_theorem_2}}

\begin{proposition} 
\label{three-one}
Assume the assumptions of Theorem \ref{main_theorem_1} and let $\mathlarger{\theta}\Big(\prod_{j=1}^{n} u_j^{v_j},q \Big)$ be as defined in \eqref{main_rank_n}. Then if there exists a non-negative integer $k$ such that $u_1, u_2, \ldots, u_k<0$ and $u_{k+1}, u_{k+2}, \ldots, u_n >0$,  we have
\begin{align*}
\Parity\left[\mathlarger{\theta}\Big(\prod_{j=1}^{n} u_j^{v_j},q \Big)\right] & \equiv \begin{cases}
\left\lfloor ~\sum_{i=1}^{n}v_i \beta_i ~\right\rfloor \pmod 2& {\rm if}
                                        ~\sum_{i=1}^{k}v_i~ {\rm is~ even},\\ 
                                        0 \pmod 2&{\rm if}~\sum_{i-1}^k v_i~{\rm is~ odd,}       \end{cases}
                                        \end{align*}
where $\beta_i$ is given in Table \ref{table:silbeta}.
\end{proposition}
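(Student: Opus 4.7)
The plan is to extract the parity of $\theta(U,q)$ with $U:=\prod_{j=1}^{n}u_j^{v_j}$ directly from the product expansion
\[
\theta(U, q) = (1 - U) \prod_{m \geq 1} \frac{(1 - q^m U)(1 - q^m U^{-1})}{(1-q^m)^2}
\]
by counting its negative real factors. The denominators are squares and contribute $+1$ to the overall sign, so only the three kinds of numerator factors matter.

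The first step is to rewrite $U$ in a form that isolates the role of $\alpha:=\sum_{i=1}^{n}v_i\beta_i$. The normalizations of Notation \ref{Notation} combined with Table \ref{table:silbeta} yield $u_j=q^{\beta_j}$ when $q>0$ and $u_j>0$, $u_j=-q^{\beta_j}$ when $q>0$ and $u_j<0$, and $u_j=|q|^{2\beta_j}$ when $q<0$ (in which case $k=0$ by Notation \ref{Notation}). Consequently,
\[
U=(-1)^{\sum_{i=1}^{k}v_i}\,q^{\alpha}\ \ (q>0),\qquad U=|q|^{2\alpha}\ \ (q<0).
\]
The irrationality of $\alpha$ (guaranteed by the $\Q$-linear independence of $\{1,\beta_1,\dots,\beta_n\}$, to be established elsewhere in the proof of Theorem \ref{main_theorem_1}) ensures $m\pm\alpha\notin\Z$, so the floor counts below are unambiguous.

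I would then split into three cases. If $q>0$ and $U=q^{\alpha}>0$ (i.e.\ $\sum_{i=1}^{k}v_i$ is even): $(1-U)$ is negative iff $\alpha<0$; the product $\prod_{m\geq 1}(1-q^{m+\alpha})$ contributes $\max\{0,\lfloor-\alpha\rfloor\}$ negative factors; and $\prod_{m\geq 1}(1-q^{m-\alpha})$ contributes $\max\{0,\lfloor\alpha\rfloor\}$ negative factors. Using $\lfloor-\alpha\rfloor=-\lfloor\alpha\rfloor-1$ for irrational $\alpha$, both subcases $\alpha>0$ and $\alpha<0$ collapse to the same total parity $\lfloor\alpha\rfloor\pmod 2$. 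If $q>0$ and $U=-q^{\alpha}<0$ (i.e.\ $k\geq 1$ and $\sum_{i=1}^{k}v_i$ is odd): the three factor types become $1+q^{\alpha}$, $1+q^{m+\alpha}$, and $1+q^{m-\alpha}$, all strictly positive, hence $\theta(U,q)>0$ and the parity is $0$. Finally, if $q<0$ (forcing $k=0$), writing $q=-r$ with $0<r<1$ makes the odd-$m$ factors $1+r^{m}U^{\pm 1}$ positive and reduces the remaining even-$m=2\ell$ factors to the previous positive case with base $r^2$ and the same exponent $\alpha$, again yielding parity $\lfloor\alpha\rfloor\pmod 2$.

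The main obstacle is the bookkeeping showing that the $\alpha>0$ and $\alpha<0$ subcases of the first case truly yield the same residue modulo $2$; this hinges on the floor-function identity above, together with the fact that the extra negative sign contributed by $(1-U)$ when $\alpha<0$ exactly compensates for the parity difference between $\lfloor\alpha\rfloor$ and $\lfloor-\alpha\rfloor$. The remaining two cases are then essentially reductions to the first. This closely mirrors the rank-one computation of Silverman--Stephens \cite{Sil1} and supplies the first of the parity contributions needed to read off the sign of the right-hand side of \eqref{main_rank_n}.
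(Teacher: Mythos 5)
Your proof is correct and follows essentially the same route as the paper: count the negative factors in the product expansion of $\theta(U,q)$, split on the sign of $q$ and of $U$ (equivalently the parity of $\sum_{i=1}^{k}v_i$), reconcile the $\alpha>0$ and $\alpha<0$ subcases via $\lfloor -\alpha\rfloor=-\lfloor\alpha\rfloor-1$, and reduce $q<0$ to the $q^2>0$ case after discarding the positive odd-$m$ factors. The only difference is that the paper proves the needed irrationality of $\sum_i v_i\beta_i$ (via $\Q$-linear independence of $\log_q|u_1|,\dots,\log_q|u_n|,1$, deduced from the linear independence of the $P_i$) inside this very proof rather than deferring it, so you should fold that short argument in here.
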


\begin{proof}
Let $u_1,u_2,u_3,\ldots,u_k < 0$ and $u_{k+1},u_{k+2},u_{k+3},\ldots,u_n > 0$. (Note that for $k=0$, this reduces to $u_i >0$ for $1\leq i \leq n$.)  For all 
$u_i<0$ we can write $u_i = (-1)|u_i| .$ Thus the expansion for 
$\mathlarger{\theta}\Big(\prod_{i=1}^{n} u_i^{v_i},q \Big)$ can be  rewritten as
\begin{equation}
\Big(1- (-1)^{\sum_{i=1}^{k}v_i}\prod_{i=1}^{n}|u_i|^{v_i}\Big) \\
   \prod_{m\geq 1}\frac{(1-q^m(-1)^{\sum_{i=1}^{k}v_i}\prod_{i=1}^{n}|u_i|^{v_i})(1-q^m
           (-1)^{\sum_{i=1}^{k}v_i}\prod_{i=1}^{n}|u_i|^{-v_i})}{(1-q^m)^2}.
           \label{prodtheta_ui}
           \end{equation}
We consider cases according to the sign of $q$.

{\bf Case I.} Suppose that $q>0$. Then from the above expression we deduce that if $\sum_{i=1}^{k}v_i$ is odd then 
$\theta\Big(\prod_{i=1}^{n} u_i^{v_i},q \Big)$ is positive. 
For the case that 
$\sum_{i=1}^{k}v_i$ is even, the factor $1-\prod_{i=1}^{n}|u_i|^{v_i}$ may be positive or negative. 
Thus we further split into two cases. \\ \\
\emph{Subcase I.} Assume that $1-\prod_{i=1}^{n}|u_i|^{v_i} > 0.$\\
We observe that for all $m\geq 1$ we have $q^m<1,$ and so $1-q^m\prod_{i=1}^{n}|u_i|^{v_i} >0.$ However,
\begin{equation*}
1-q^m\prod_{i=1}^{n}|u_i|^{-v_i} <0 \quad 
\iff  m < \sum_{i=1}^{n}v_i\log_q|u_i|.
\end{equation*}
Hence for this case there are $\big\lfloor ~\sum_{i=1}^{n}v_i\log_q|u_i| ~\big\rfloor $ negative
signs in the expression 
\eqref{prodtheta_ui} for  $\mathlarger{\theta}\Big(\prod_{i=1}^{n} u_i^{v_i},q 
\Big).$\\\\ 
\emph{Subcase II.} Assume that $1-\prod_{i=1}^{n}|u_i|^{v_i} < 0.$\\
Following a similar argument used in the Subcase I we have that, 
\begin{equation*}
1-q^m\prod_{i=1}^{n}|u_i|^{v_i} <0 \quad \iff \quad m < \sum_{i=1}^{n}-v_i\log_q|u_i|.
\end{equation*}
Observe that since $1-\prod_{i=1}^{n}|u_i|^{v_i}<0,$ we have $\sum_{i=1}^{n}-v_i\log_q|u_i| > 0.$ 
Hence there are in total $\big\lfloor -\sum_{i=1}^{n}v_i\log_q|u_i| ~ \big\rfloor +1 $ negative signs in 
expression \eqref{prodtheta_ui} for  $\mathlarger{\theta}\Big(\prod_{i=1}^{n} u_i^{v_i},q \Big).$ 
(The addition of 1 in the count of negative signs comes from the factor $1-\prod_{i=1}^{n}|u_i|^{v_i} .$)

Now we claim that the number $ \sum_{i=1}^{n}v_i\log_q|u_i| $ is not an integer. More generally, we 
claim that $\log_q|u_1|,\log_q|u_2|,\ldots,\log_q|u_n|, $ and 1 are linearly independent over $\Q.$ To 
see this suppose that there are integers $k_0, k_1,k_2,\ldots,k_n $ not all zero such that 
the sum $ \sum_{i=1}^{n}k_i\log_q|u_i| +k_0 =  0.$ Equivalently we have that $ \sum_{i=1}^{n}
k_iz_i = -k_0.$ Note that we have $1\in \Lambda_{\tau}$, 
hence under the isomorphism $\C/\Lambda_{\tau} \cong E(\C)$ integers are mapped to the 
identity element of $E(\C).$ Thus $ \sum_{i=1}^{n}k_iz_i = -k_0$ under the isomorphism 
$\C/\Lambda_{\tau} \cong E(\C)$ leads to $ \sum_{i=1}^{n}k_iP_i =\mathcal{O}.$  This 
contradicts our assumption that the points $P_1,P_2,\ldots,P_n$ are linearly independent in $E(\R).$ 
Hence we have that $\log_q|u_1|,\log_q|u_2|,\ldots,\log_q|u_n|, $ and 1 are linearly independent over 
$\Q.$ (This also shows that each number $\log_q|u_i|$ is irrational.) Therefore the number 
$ \sum_{i=1}^{n}v_i\log_q|u_i|$ can not be an integer. Using this fact and the property of the greatest integer function that
\begin{equation}\label{floor}
\lfloor x \rfloor + \lfloor -x \rfloor = 
 \begin{cases} 0 &\mbox{if}~ x\in \Z, \\
              -1 &\mbox{if}~ x\not\in \Z,
 \end{cases}
\end{equation}
we see that the number of negative signs in Subcase II is $-\big\lfloor \sum_{i=1}^{n}v_i\log_q|u_i| ~ \big\rfloor $.
Therefore we can combine 
the results from these two sub-cases to get that
\begin{align}\label{parqp3}
\Parity\left[\mathlarger{\theta}\Big(\prod_{i=1}^{n} u_i^{v_i},q \Big)\right] 
       &\equiv \left\lfloor ~\sum_{i=1}^{n}v_i\beta_i ~\right\rfloor \pmod 2 \quad \text{if}
                                        ~\sum_{i=1}^{k}v_i~ \text{is even},                                       
\end{align}
where $\beta_i =\log_q|u_i|$ for all $1\leq i \leq n.$\\ 

{\bf Case II.}  Suppose that $q <0$.
Let $x=\prod_{i=1}^n u_i^{v_i}$. Note that in this case $u_i>0$ for $1\leq i \leq n$,
hence $x>0$.
From definition of $\theta$ we have
\begin{eqnarray*}
	\theta(\prod_{i=1}^n u_i^{v_i}, q) &=& \theta(x,q) \\
	&=& (1-x) \prod_{m\geq 1} {(1-xq^m)(1-xq^{-m}) \over (1-q^m)^2} \\
	&=& (1-x) \left(\prod_{m\geq 1} {(1-xq^{2m})(1-xq^{-2m}) \over (1-q^{2m})^2} \right)
		\left(\prod_{m\geq 1} {(1-xq^{2m+1})(1-xq^{-2m-1}) \over (1-q^{2m+1})^2} \right) \\
	&=&  \theta(x, q^2)
		\prod_{m\geq 1} {(1-xq^{2m+1})(1-xq^{-2m-1}) \over (1-q^{2m+1})^2}
\end{eqnarray*}
Note that $1-xq^{2m+1}$ and $1-xq^{-2m-1}$ are both positive, since $q$ is assumed to be
negative. As a result
\[ \prod_{m\geq 1} {(1-xq^{2m+1})(1-xq^{-2m-1}) \over (1-q^{2m+1})^2} > 0,\]
and we get $\Sign\left[\theta(x,q)\right] = \Sign\left[\theta(x,q^2)\right].$
Since $q^2>0$ and $u_i>0$, applying  \eqref{parqp3} we get
\begin{align}
\Parity\left[\mathlarger{\theta}\Big(\prod_{i=1}^{n} u_i^{v_i},q \Big)\right] 
       \equiv
\Parity\left[\mathlarger{\theta}\Big(\prod_{i=1}^{n} u_i^{v_i},q^2 \Big)\right] 
       \equiv
			 \left\lfloor ~\sum_{i=1}^{n}v_i\beta_i ~\right\rfloor \pmod 2
\end{align}
where $\beta_i = \log_{q^2} u_i = {1\over 2}\log_{|q|} u_i$.
\end{proof}

We record two immediate corollaries from this proposition, which we will use
in next section.
\begin{corollary}
	\label{corui}
	Assume that $u_i$ and $q$ are normalized so that if $q>0$ then 
	$q<|u_i|<1$, and for $q<0$ we have $q^2<u_i<1.$
	Then $\theta(u_i, q)>0$.
\end{corollary}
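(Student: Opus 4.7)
The plan is to verify that every factor in the infinite-product definition
\[
\theta(u_i,q) = (1-u_i)\prod_{m\geq 1} \frac{(1-q^m u_i)(1-q^m u_i^{-1})}{(1-q^m)^2}
\]
is positive under the stated normalization; this is a purely elementary case analysis. Since $(1-q^m)^2>0$ always, only the numerators need attention, and I would split according to the sign of $q$.

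First, suppose $q>0$. Then the normalization gives $q<|u_i|<1$, which implies $1-u_i>0$ (whether $u_i$ is positive or negative, since $u_i<1$). For the infinite product: if $u_i>0$, then $0<u_i<1$ and $0<q^m\le q<u_i$, so both $q^m u_i<1$ and $q^m u_i^{-1}\le q\cdot u_i^{-1}<1$, giving positive factors; if $u_i<0$, then $q^m u_i<0$ and $q^m u_i^{-1}<0$ for every $m\ge 1$, so $1-q^m u_i$ and $1-q^m u_i^{-1}$ are each strictly greater than $1$.

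Next, suppose $q<0$, in which case the normalization forces $u_i>0$ with $q^2<u_i<1$. Then $1-u_i>0$. I would split the product over $m$ into odd and even $m$: for odd $m$ one has $q^m<0$, so $q^m u_i<0$ and $q^m u_i^{-1}<0$, making both numerator factors exceed $1$; for even $m\ge 2$ one has $0<q^m\le q^2<u_i<1$, so $q^m u_i<q^2<1$ and $q^m u_i^{-1}\le q^2 u_i^{-1}<1$, so both factors are again positive. (This is exactly the argument used implicitly in Case II of Proposition \ref{three-one} when reducing $\theta(x,q)$ to $\theta(x,q^2)$.)

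Since every factor in the product is positive in each case, $\theta(u_i,q)>0$ as claimed. There is no real obstacle here: the content is entirely bookkeeping, and the only point where the normalization is genuinely used is in bounding $q^m u_i^{-1}$ (for $q>0$) and $q^{2m} u_i^{-1}$ (for $q<0$) away from $1$; this is precisely what the hypotheses $q<|u_i|$ and $q^2<u_i$ are designed to guarantee.
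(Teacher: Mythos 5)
Your proof is correct. The case analysis is complete and each inequality is justified by exactly the hypotheses in the normalization: for $q>0$ the bound $q<|u_i|$ controls $q^m u_i^{-1}$, and for $q<0$ the bound $q^2<u_i$ controls the even-$m$ factors, while the odd-$m$ and negative-$u_i$ factors are trivially greater than $1$. The route is different from the paper's, though: the paper proves this corollary in two lines by specializing Proposition \ref{three-one} to a single index, taking $v_i=1$ when $u_i<0$ (so the ``odd'' case gives parity $0$) and $k=0$ when $u_i>0$ or $q<0$ (so the parity is $\lfloor \beta_i\rfloor$, which vanishes because the normalization forces $0<\beta_i<1$). Your argument instead redoes the factor-by-factor sign count directly for this special case — which is essentially the computation hiding inside the proof of Proposition \ref{three-one}, but presented self-contained. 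What the paper's version buys is brevity and a clear display of why the normalization matters (it is precisely the condition $0<\beta_i<1$); what your version buys is independence from the proposition and a slightly more transparent elementary argument. One small point common to both proofs, which you could note in passing: concluding positivity from positivity of every factor tacitly uses that the infinite product converges to a nonzero limit, which holds because $|q^{m}u_i^{\pm 1}|\to 0$ geometrically.
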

\begin{proof}
If $q>0$ and $u_i<0$, then by Proposition \ref{three-one} for $v_i=1$ (odd) we have
\[\Parity\left[\theta(u_i,q)\right]\equiv  0 \pmod 2. \]
Also if $q>0$ and $u_i>0$ or $q<0$, then by Proposition \ref{three-one} for $k=0$ (even), we have
	\[\Parity\left[\theta(u_i,q)\right]\equiv \lfloor \beta_i \rfloor = 0 \pmod 2, \]
	since $0<\beta_i<1$.
	Thus in both cases $\theta(u_i,q)$ is positive.
\end{proof}

\begin{corollary}
	\label{coruij}
	Assume that $u_i$, $q$, and $\beta_i$ are defined as in Proposition \ref{three-one}.
	Then 
	\[\Parity\left[\theta(u_iu_j, q)\right] \equiv \begin{cases}
			\lfloor \beta_i + \beta_j \rfloor \pmod 2 & \mbox{ if $u_i u_j>0,$} \\
			0 \pmod 2 & \mbox{ if $u_i u_j<0,$} 
		\end{cases}
	\]
\end{corollary}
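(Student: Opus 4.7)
The plan is to derive Corollary \ref{coruij} as a direct two--variable specialization of Proposition \ref{three-one}, taking the $n$-tuple $\mb{v}$ with $v_i = v_j = 1$ and $v_\ell = 0$ for $\ell \notin \{i,j\}$, so that the product $\prod_{\ell=1}^n u_\ell^{v_\ell}$ collapses to $u_iu_j$ and the linear form $\sum_\ell v_\ell \beta_\ell$ collapses to $\beta_i+\beta_j$. This is exactly parallel to how Corollary \ref{corui} is obtained from Proposition \ref{three-one} by a one--variable specialization.

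Before case--splitting, I would make one bookkeeping observation: the statement of Proposition \ref{three-one} lists the negative $u_\ell$'s as a prefix $u_1,\dots,u_k$, but this prefix assumption is purely notational --- what the proof actually uses is the sign $(-1)^{\sum_\ell v_\ell [u_\ell<0]}$ coming out of the substitution $u_\ell=(-1)|u_\ell|$ in \eqref{prodtheta_ui}. Consequently the dichotomy in Proposition \ref{three-one} is really governed by the parity of the number of negative $u_\ell$'s among those with nonzero $v_\ell$, and the proposition may be applied to an arbitrary unordered pair $\{i,j\}$ without relabeling.

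With this in hand the case analysis is immediate. If $u_iu_j>0$, then either both $u_i$ and $u_j$ are positive (negative count $0$) or both are negative (negative count $2$); in either case the count is even, so Proposition \ref{three-one} yields
\[
\Parity\!\left[\theta(u_iu_j,q)\right]\equiv \bigl\lfloor v_i\beta_i+v_j\beta_j\bigr\rfloor = \lfloor \beta_i+\beta_j\rfloor \pmod 2.
\]
If instead $u_iu_j<0$, then exactly one of $u_i,u_j$ is negative, the count is $1$ (odd), and Proposition \ref{three-one} gives $\Parity[\theta(u_iu_j,q)]\equiv 0\pmod 2$. Together these match the two branches of the stated formula.

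There is no real obstacle here: the corollary is simply the two--index specialization of the proposition. The only mildly delicate point is to notice that the indexing convention of Proposition \ref{three-one} is not binding, which follows from an inspection of its proof --- nothing there depends on where the negative indices are located.
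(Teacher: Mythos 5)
Your proof is correct and is exactly the argument the paper intends: the paper's own proof of Corollary \ref{coruij} is the single line ``It follows from the result of Proposition \ref{three-one},'' i.e.\ the specialization $v_i=v_j=1$, all other $v_\ell=0$. Your additional remark that the prefix ordering of the negative $u_\ell$'s in Proposition \ref{three-one} is purely notational (only the parity of the number of negative $u_\ell$'s with $v_\ell\neq 0$ matters) is a genuine detail the paper leaves implicit, and it is handled correctly.
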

\begin{proof}
It follows from the result of Proposition \ref{three-one}.
\end{proof}

We now proceed with the main proof of this section.
\begin{proof}[Proof of Theorem \ref{main_theorem_2}]
First of all note that for a non-singular non-degenerate elliptic
net $W: \Z^n \longrightarrow \R $ there exists an elliptic curve $E$ defined over $\R$ and a 
collection $\mb{P} = (P_1,~P_2,\ldots,P_n )$ of points in $E(\R),$ such that 
\begin{equation*}
W(\mb{v}) = f(\mb{v})\Psi_\mb{v}(\mb{P};E)
\end{equation*}
for any $\mb{v} \in \Z^n.$ Here $f:\Z^n \longrightarrow \R^*$ is a quadratic form and 
$\Psi (\mb{P};E)$ is the elliptic net associated to $\mb{P}$ and $E.$
Moreover, since $W(\mb{v})\neq 0$ for $\mb{v}\neq \mb{0}$ we
have that $P_1, P_2, \ldots, P_n$ are $n$ linearly independent points in $E(\mathbb{R})$ (See \cite[Theorem 7.4]{Stange}).
Next observe that in the expression \eqref{main_rank_n} the numbers $u_j$ and $q$ are in $\R^*.$ Therefore the product containing
$u_j$ and $q$ are also in $\R.$ 
Also by \cite[Theorem 4.4]{Stange}, since $E$ is defined over $\mathbb{R}$ then $\Psi_\mb{v}(\mb{P};E)\in \mathbb{R}$.
Hence from \eqref{main_rank_n} 
we conclude that $(2 \pi i/\alpha)^{\mathlarger{\sum_{i=1}^{n} v_i^2-\sum_{1\leq i <j 
\leq n} v_iv_j-1}} \in \R^*.$ Note that this statement is true for all $\mb{v} \in \Z^n,$ 
therefore for $n\geq 2$, taking $v_1 = 1, v_2 = 2$ and $v_i = 0$ for all $3 \leq i \leq n,$ we get that $(2\pi i/
\alpha)^2 \in \R^*.$ Furthermore, taking $v_1 = 2$ and $v_i = 0$ for all $2 \leq i \leq n,$ shows 
that $(2 \pi i/\alpha)^3 \in \R^*. $ Since $(2 \pi i/\alpha)^2$ and $(2 \pi i/\alpha)^3 \in \R^*$, 
we have that $2\pi i/\alpha \in \R^*. $  A similar result also holds if $n=1$, by choosing $v_1=2$ and $3$. Hence $2\pi i/\alpha$ is either a positive real number or a 
negative real number. Thus, after possibly replacing $W(\mb{v})$ with 
{\tiny $(-1)^{ \displaystyle {\sum_{i=1}^{n} v_i^2-\hspace{-0.7em} \sum_{1\leq i <j \leq n} 
\hspace{-0.7em}v_iv_j-1}}$}\hspace{-0.7em} $W(\mb{v})$, we have

\begin{align}\label{S(W)}
\Sign[W(\mb{v})] &= \Sign\left[g(\mb{v})\right]~\Sign\left[\prod_{i=1}^{n}u_i^{(v_i^2-v_i)/2}\right]~\Sign \left[ \mathlarger{\theta}\Big(\prod_{j=1}^{n} u_j^{v_j},q \Big)\right], 
\end{align}
where $$g(\mb{v})=  \frac{f_1(\mb{v})}{\displaystyle \prod_{j=1}^{n}
\theta(u_j, q)^{2v_j^2-\sum_{k=1}^n v_jv_k } \hspace{-0.7em}\prod_{1\leq j <k \leq n}\hspace{-0.7em}\theta(u_ju_k, q)^{v_jv_k} }.    $$
Here $f_1(\mb{v})=${\tiny $(-1)^{ \displaystyle {\sum_{i=1}^{n} v_i^2-\hspace{-0.7em} \sum_{1\leq i <j \leq n} 
\hspace{-0.7em}v_iv_j}}$}\hspace{-0.7em} $f(\mb{v})$, if $W(\mb{v})$ was replaced by
{\tiny $(-1)^{ \displaystyle {\sum_{i=1}^{n} v_i^2-\hspace{-0.7em} \sum_{1\leq i <j \leq n} 
\hspace{-0.7em}v_iv_j-1}}$}\hspace{-0.7em} $W(\mb{v})$, otherwise $f_1(\mb{v})=f(\mb{v})$.
Observe that $g(\mb{v})$ is a quadratic form. From \eqref{S(W)} we have
\begin{align}\label{S(W)1}
\Parity[W(\mb{v})] &= \Parity\left[g(\mb{v})\right]+\Parity\left[\prod_{i=1}^{n}u_i^{(v_i^2-v_i)/2}\right]+\Parity\left[ \mathlarger{\theta}\Big(\prod_{j=1}^{n} u_j^{v_j},q \Big)\right]. 
\end{align}
We next deal with $\Parity\left[\prod_{i=1}^{n}u_i^{(v_i^2-v_i)/2}\right]$. If all $u_i>0$ this value is zero. Now assume that $u_1,u_2,u_3,\dots, u_k < 0$ and  $u_{k+1},u_{k+2},u_{k+3}\dots u_n > 0.$ Looking at values of $v_i$ modulo $4$, we get that
\begin{align}\label{parqp_4}
\Parity\left[\prod_{i=1}^{n} u_i^{(v_i^2-v_i)/2}\right] 
&\equiv \sum_{i=1}^{k} \left\lfloor \frac{v_i}{2} \right\rfloor \pmod 2. 
\end{align}
Next  we define $H:\Z^n \longrightarrow \Z $ as follows.
If $u_1,u_2,u_3,\dots, u_k < 0$ and  $u_{k+1},u_{k+2},u_{k+3}\dots u_n > 0, $ we set 

\begin{align*}                        
H(\mb{v}) &=
\begin{cases}
\displaystyle  \Big\lfloor \sum_{i=1}^{n}v_i\beta_i \Big\rfloor + \sum_{i=1}^{k} 
                   \Big\lfloor \frac{v_i}{2} \Big\rfloor 
                 & \text{if $ \displaystyle ~\sum_{i=1}^{k}v_i$ is even, } \\ 
\displaystyle ~ \sum_{i=1}^{k} \Big\lfloor \frac{v_i}{2}\Big \rfloor  & \text{if $\displaystyle ~\sum_{i=1}^{k}v_i$ is odd. }  
\end{cases}
\end{align*}
From \eqref{S(W)1}, Proposition \ref{three-one}, \eqref{parqp_4}, and the expressions for $H(\mb{v})$, we conclude that 

\begin{equation*}
\Parity[g(\mb{v})W(\mb{v})] \equiv H(\mb{v}) \pmod 2.
\end{equation*}
The proof is complete.
\end{proof}

\section{Proof of Theorem \ref{main_theorem_1}}
\begin{proof}[Proof of Theorem \ref{main_theorem_1}]
First of all note that in the proof of Proposition \ref{three-one} we showed that $\beta_1, \ldots, \beta_n$ are $n$ irrational numbers that are linearly independent over $\mathbb{Q}$. Moreover, 
as described in the proof of Theorem \ref{main_theorem_2}, $2\pi i/\alpha$ in \eqref{main_rank_n} is a non-zero real number. From now on, without loss of generality, we will assume that 
$2\pi i/\alpha>0.~  ($Note that if $2\pi i/\alpha < 0$ we can compute the sign of $~\Omega_{\mb{v}}
(\mb{z}; \Lambda)$ by considering $(-1)^{ \mathlarger {\sum_{i=1}^{n} v_i^2- \sum_{1\leq i <j \leq 
n} v_iv_j-1}}\Omega_{\mb{v}}(\mb{z};\Lambda).$) Since $2 \pi i\alpha^{-1} >0, $ it does not play any 
role in determining the sign of \eqref{main_rank_n}. Thus from \eqref{main_rank_n} we have that the 
Parity$\big[\Omega_{\mb{v}}(\mb{z};\Lambda)\big]$ in $\Z/2\Z $ is equal to 
\begin{multline}\label{main_parity}
       \Parity\left[\prod_{i=1}^{n} u_i^{(v_i^2-v_i)/2}\right]  
    +  \Parity\left[\mathlarger{\theta}\Big(\prod_{i=1}^{n} u_i^{v_i},q \Big)\right] \\  
    +  \Parity\left[\prod_{i=1}^{n}\theta(u_i, q)^{2v_i^2-\sum_{j=1}^n v_iv_j }\right] 
    +  \Parity\left[\prod_{1\leq i <j \leq n}\theta(u_iu_j, q)^{v_iv_j} \right].
\end{multline}
The first two terms of the above sum were computed in \eqref{parqp_4} and Proposition \ref{three-one} respectively.
By Corollary \ref{corui}, we get that $\theta(u_i,q)>0$, so the third summand is even. Thus, 
\begin{equation}
\label{last}
 \Parity\left[\prod_{i=1}^{n}\theta(u_i, q)^{2v_i^2-\sum_{j=1}^n v_iv_j }\right] \equiv 0 \pmod{2}.
 \end{equation}
 
Finally, for the last summand we have
\begin{eqnarray*}
	\Parity\left[\prod_{1\leq i <j \leq n}\theta(u_iu_j, q)^{v_iv_j} \right] & \equiv & 
	\sum_{1 \leq i<j \leq n} v_iv_j \Parity\left[\theta(u_iu_j,q)\right] \pmod 2.
\end{eqnarray*}
Note that in the range $1 \leq i<j \leq n$, we have $u_iu_j<0$ only when $1 \leq i \leq k < j \leq n$.
(That is, $u_iu_j>0$ when $1\leq i<j \leq k$ or $k+1 \leq i < j \leq n$.)
By Corollary \ref{coruij} we have
\[
	\Parity\left[\theta(u_iu_j,q)\right] \equiv \begin{cases}
		0 \pmod 2 & \mbox{ if $1 \leq i \leq k < j \leq n,$} \\
		\lfloor \beta_i + \beta_j \rfloor \pmod 2 & \mbox{ otherwise.}
	\end{cases}
\]
Therefore we get
\begin{align}
\label{final}
	\Parity\left[\prod_{1\leq i <j \leq n}\theta(u_iu_j, q)^{v_iv_j} \right] & \equiv \sum_{1 \leq i<j \leq n} v_iv_j \Parity\left[\theta(u_iu_j,q)\right] \nonumber \\ 
	&\equiv \sum_{1 \leq i < j \leq k} v_i v_j \lfloor{\beta_i + \beta_j} \rfloor
	 + \sum_{k+1 \leq i < j \leq n} v_i v_j \lfloor{\beta_i + \beta_j} \rfloor \pmod 2.
\end{align}

Now applying \eqref{parqp_4}, Proposition \ref{three-one}, \eqref{last}, and \eqref{final} in \eqref{main_parity} yield
\begin{equation*}
     \Parity[\Omega_{\mb{v}}(\mb{z};\Lambda)]\equiv
\begin{cases}
\displaystyle \sum_{1\leq i<j\leq k}\lfloor \beta_i + \beta_j \rfloor v_iv_j + 
      \hspace{-0.9em} \sum_{k+1\leq i<j\leq n }\lfloor \beta_i + \beta_j \rfloor v_iv_j \\ 
+\displaystyle \Big\lfloor ~\sum_{i=1}^{n}v_i\beta_i ~\Big\rfloor + \sum_{i=1}^{k} \left\lfloor \frac{v_i}{2} \right\rfloor \pmod 2 \hspace{1.5cm} \text{if}~ \sum_{i=1}^{k}v_i~ \text{is even,}\\\\
\displaystyle \sum_{1\leq i<j\leq k}\lfloor \beta_i + \beta_j \rfloor v_iv_j + 
      \hspace{-0.9em} \sum_{k+1\leq i<j\leq n }\lfloor \beta_i + \beta_j \rfloor v_iv_j \\
 + \displaystyle \sum_{i=1}^{k} \left\lfloor \frac{v_i}{2} \right\rfloor \pmod 2 \hspace{3.8cm} \text{if}~ \sum_{i=1}^{k}v_i~ \text{is odd.}
\end{cases}
\end{equation*} 
\end{proof}

\section{Numerical Examples}
We now give illustrations of various cases of Theorem \ref{main_theorem_1} with the help of some
examples. For sake of simplicity we only give examples for rank 2 elliptic nets. 
All the computations were done using mathematical software  \texttt{SAGE.}

Keeping the assumptions and notations used in Theorem \ref{main_theorem_1}, for the case $n=2,$ 
the sign of either $\Psi_{\mb{v}}(\mb{P};E)$ or   $(-1)^{v_1^2+v_2^2-v_1v_2-1}\Psi_{\mb{v}}(\mb{P};E),$ can be computed using one of the following parity formulas:
\begin{align}
\Parity[\Psi_{\mb{v}}(P;E)] &\equiv \Big\lfloor v_1\beta_1 + v_2\beta_2 \Big\rfloor + 
     \Big\lfloor \beta_1 + \beta_2 \Big\rfloor v_1v_2 \pmod 2  \label{u1pu2p} \\
\Parity[\Psi_{\mb{v}}(P;E)] &\equiv
\begin{cases}
\displaystyle\Big\lfloor v_1\beta_1+v_2\beta_2 \Big\rfloor + \Big\lfloor \frac{v_1}{2} 
               \Big\rfloor \pmod 2 & \text{if $v_1$ is even, } \\   
\displaystyle \Big\lfloor \frac{v_1}{2} \Big\rfloor  \pmod 2 &  \text{if $v_1$ is odd. }
\end{cases} \label{u1nu2p} \\
\Parity[\Psi_{\mb{v}}(P;E)] &\equiv
\begin{cases}
\displaystyle \Big\lfloor v_1\beta_1+v_2\beta_2 \Big\rfloor + \Big\lfloor \frac{v_2}{2} 
      \Big\rfloor \pmod 2 & \text{if $v_2$ is even, } \\
\displaystyle  \Big\lfloor \frac{v_2}{2} \Big\rfloor  \pmod 2 &  \text{if $v_2$ is odd. }
\end{cases} \label{u1pu2n} \\
\Parity[\Psi_{\mb{v}}(P;E)] &\equiv
\begin{cases}
\Big\lfloor v_1\beta_1+v_2\beta_2 \Big\rfloor +\Big\lfloor \beta_1 + \beta_2 \Big\rfloor v_1v_2\\ 
   \hspace{6.5em}      \displaystyle + \Big\lfloor \frac{v_1}{2} \Big\rfloor + 
\Big\lfloor \frac{v_2}{2} \Big\rfloor   \pmod 2 & \text{if $v_1+v_2$ is even, } \\
\Big\lfloor \beta_1 + \beta_2 \Big\rfloor v_1v_2+ \displaystyle \Big\lfloor \frac{v_1}{2} 
\Big\rfloor + \Big\lfloor \frac{v_2}{2} \Big\rfloor   \pmod 2 &  \text{if $v_1+v_2$ is odd. }
\end{cases} \label{u1nu2n}
\end{align}
Here the two irrational numbers $\beta_1$ and $\beta_2 $ are given by the rules in Table 
\ref{table:silbeta}.
%
The formula \eqref{u1pu2p} is used when $u_1>0$ and $u_2>0$ and formula \eqref{u1nu2p} is used for the case when $u_1<0$ and $u_2>0.$ We use the formula \eqref{u1pu2n} when $u_1>0$ and $u_2<0.$ Finally the formula \eqref{u1nu2n} is used when both $u_1<0$ and $u_2<0.$

We have verified the truth of the above formulas for several rank 2 elliptic net $W(v_1,v_2)$ in the 
range  $0\leq v_1\leq 500$ and $0\leq v_2\leq 500 .$ Thus the results have been verified for 
$25\times 10^4$ of values of $W(v_1,v_2)$ and the same for the negative indices as well.
 
\begin{example}
Let $E$ be the elliptic curve defined over $\R $ given by the Weierstrass equation $ y^2 + x y = x^{3} -  x^{2} - 4 x + 4 .$ Let
$P_1 = (69/25, -532/125)$ and $P_2 = (2,-2)$ be two points in $E(\R).$ Let $\mb{P} = (P_1,P_2).$ 
The following table presents the values of $\Psi_{\mb{v}}(\mb{P};E)$ for $\mb{v} = 
(v_1,v_2)$ in the range $ 0 \leq v_1\leq 3$  and $ 0 \leq v_2\leq 5.$  
\begin{center} 
$\begin{array}{r r r r r r }
\hline
&            &             & \vdots          &              &                     \\
&-832 & 1232600000 & 430685595625000000 & 3330569636331576171875000000 &\\
&112 & -12560000 & -18772893750000 & 121093285553785156250000 &\\
\cdots &-4 & -165500 & -141878687500 & -1754232556789062500 & \cdots \\
&-2 & -150 & 196317500 & -1270400610718750  &\\
&1 & 95 & 152725 & -181061702375 &\\
& \mb{0} & 5 & -3595 & 63803440 &\\
&            &             & \vdots          &              &                    \\
\hline
\end{array}$
\captionof{table}{Elliptic net $\Psi(\mb{P};E)$ associated to elliptic curve $ E : y^2 + x y = x^{3} -  x^{2} - 4 x + 4 $ and points $P_1 = (69/25, -532/125),$ $P_2 = (2,-2).$}
\end{center}
%
%

In the above array the bottom left corner represents the value  $\Psi_{(0,0)}(\mb{P};E)$ and the 
upper right corner represents $\Psi_{(3,5)}(\mb{P};E).$

There is an isomorphism $E(\R)\cong \R^*/q^\Z $ such that $P_1\longleftrightarrow u_1$
and $P_2\longleftrightarrow u_2 $ with the explicit values
\begin{align*}
        q     &=  0.0001199632944492781512985480142643667840\ldots\ldots,   \\
        u_1   &=  0.0803285719586868777961922659399264909608\ldots\ldots, \\
        u_2   &=  0.03600942542966326797848808049477306988456\ldots\ldots       
\end{align*}
Since $u_1, u_2 > 0,$  by employing Theorem \ref{main_theorem_1}, the sign of 
$\Psi_{\mb{v}}(\mb{P};E)$ up to a factor of $(-1)^{v_1^2+v_2^2-v_1v_2-1}$ can be calculated by \eqref{u1pu2p}. Since Theorem \ref{main_theorem_1} gives either sign of $\Psi_{\mb{v}}(\mb{P};E)$ or 
$\displaystyle (-1)^{v_1^{2}+v_2^{2}-v_1v_2-1}\Psi_{\mb{v}}(\mb{P};E).$ 

By computing the sign of $\Psi_{(2,2)}(\mb{P};E)$ using \eqref{u1pu2p} 
we conclude that in this case the parity is given by the formula 
\begin{align*}
\Parity[\Psi_{\mb{v}}(\mb{P};E)] &\equiv \Big\lfloor v_1\beta_1 + v_2\beta_2 \Big\rfloor + 
                  \Big\lfloor \beta_1 + \beta_2 \Big\rfloor v_1v_2 + (v_1+v_2+v_1v_2+1) \pmod 2 
\end{align*}
with  \vspace{-1em}
\begin{align*}
 \beta_1 &= 0.2793020829801927957749331343976812416467 \ldots,  \\
     \beta_2 &= 0.3681717984734797193981452826601334954064 \ldots .
\end{align*}

Next we illustrate the truth of our formula using two special cases.
\begin{align*}
 {\rm Sign[}\Psi_{(1,3)}(\mb{P};E)] 
&= (-1)^{\lfloor \beta_1+3\beta_2\rfloor + 3 \lfloor \beta_1 + \beta_2 \rfloor + 8 } = -1  
 \end{align*}
and 
\begin{align*}
 \Sign[\Psi_{(3,4)}(\mb{P};E)] 
 &= (-1)^{\lfloor 3\beta_1+4\beta_2\rfloor + 12 \lfloor \beta_1 + \beta_2 \rfloor  + 20  } = 1,  
\end{align*}
which agree with the signs from the above table.
\end{example}

\begin{example}
Let $E$ be the elliptic curve defined over $\R $ given by the Weierstrass equation $ y^2 + x y = x^{3} -  x^{2} - 4 x + 4 .$ Let 
$P_1 = (-1, 3)$ and $P_2 = (3,2)$ be two points in $E(\R)$ so that $\mb{P} = (P_1,P_2) .$ The 
following table presents the values of $\Psi_{\mb{v}}(\mb{P};E)$ for $\mb{v} = (v_1,v_2)$ in the 
range $ 0 \leq v_1\leq 3$ and $ 0 \leq v_2\leq 6.$ 
\\ 
\begin{center}
$\begin{array}{r r r r r r}
\hline
& &  \vdots &  &  \\
& -219900856 & 71486913947 & 48178148140103 & -112925826309806338 &\\
& -495235 & 58762243 & 3246745150 & -20471103308793 &\\
& -749 & 170718 & -24093133 & -16532329817 &\\
\cdots & 62 & 2291 & -154139 & -28273396 &\cdots \\
& 7 & 67 & -1256 & -101083 &\\
& 1 & 4 & 3 & -1579 &\\
& \mb{0} & 1 & 5 & -94 &\\
& &  \vdots &  &  &\\
\hline
\end{array} $ 
\captionof{table}{Elliptic net $\Psi(\mb{P};E)$ associated to elliptic curve $ E : y^2 + x y = x^{3} -  x^{2} - 4 x + 4 $  and points $P_1 = (-1, 3),$ $P_2 = (3,-2).$}

\end{center}

In this case there is an isomorphism $E(\R) \cong \R^*/q^\Z $ such that $P_1\longleftrightarrow u_1 
$ and $P_2\longleftrightarrow u_2 $ with the explicit values
\begin{align*}
        q     &=  0.0001199632944492781512985480142643667840\ldots\ldots,   \\
        u_1   &= -0.283422955948679072053638499724508663516\ldots\ldots, \\
        u_2   &= 0.00129667871977447963166306014589504823338\ldots\ldots       
\end{align*}
Observe that $q $ is the same as in the previous example. 
Further since $u_1 < 0,$ and $u_2>0$, 
by using Theorem \ref{main_theorem_1}, parity of $\Psi_{\mb{v}}(\mb{P};E)$ is either given by 
\eqref{u1nu2p} or 
\begin{align}
\Parity[\Psi_{\mb{v}}(\mb{P};E)] &\equiv
\begin{cases}
\displaystyle\Big\lfloor v_1\beta_1+v_2\beta_2 \Big\rfloor + \Big\lfloor \frac{v_1}{2} 
               \Big\rfloor +v_2 + 1 \pmod 2 & \text{if $v_1$ is even} \\   
\displaystyle \Big\lfloor \frac{v_1}{2} \Big\rfloor  \pmod 2 &  \text{if $v_1$ is odd}
\end{cases}\label{parity_21}
\end{align}
 with 
\begin{align*}
 \beta_1 &= 0.1396510414900963978874665671988406208233 \ldots,  \\
     \beta_2 &= 0.7363435969469594387962905653202669908128 \ldots.
\end{align*}
 
By computing the sign of $\Psi_{(2,2)}(\mb{P};E)$ using \eqref{u1nu2p} and \eqref{parity_21} 
we conclude that in this case the parity is given by formula \eqref{parity_21}. Next we illustrate the truth of our formula using two special cases.
\begin{align*}
 {\rm Sign[}\Psi_{(2,3)}(\mb{P};E)] &= (-1)^{\Parity[\Psi_{(2,3)}(\mb{P};E)]} = (-1)^{\lfloor 2\beta_1+3\beta_2\rfloor + \lfloor 1 \rfloor + 3 + 1 }  = -1  
 \end{align*}
and 
\begin{align*}
 \Sign[\Psi_{(1,5)}(\mb{P};E)] 
  &= (-1)^{ \lfloor \frac{1}{2} \rfloor } = 1  
\end{align*}
Again these agree with the signs from the above table.
\end{example}
%

\begin{example}
Let $E$ be the elliptic curve defined over $\R $ given by the Weierstrass equation $ y^2 + y = x^{3} + x^{2} - 2 x .$ Let $P_1 = (-1, 1)$ and $P_2 = (0,-1)$ 
be two points in $E(\R).$ Let $\mb{P} = (P_1,P_2) .$ The following table presents the 
values of $\Psi_{\mb{v}}(\mb{P};E)$ for $\mb{v} = (v_1,v_2)$ in the range $ -5 \leq v_1\leq 5$  and 
$ -2 \leq v_2\leq 2.$\\  
\begin{center} 
$
\begin{array}{r r r r r r r r r r r r r }
\hline
& &            &      & &       & \vdots          &              &              &       \\
&535 & 44 & -7 & -1 & 1 & -1 & -4 & 17 & 151 & -55 & -106201 & \\
&1187 & 67 & 1 & -2 & -1 & 1 & 1 & -5 & 26 & 709 & -19061 &  \\
\cdots& -3376 & 129 & 19 & -3 & -1 & \mb{0} & 1 & 3 & -19 & -129 & 3376 & \cdots  \\
&19061 & -709 & -26 & 5 & -1 & -1 & 1 & 2 & -1 & -67 & -1187 & \\
&106201 & 55 & -151 & -17 & 4 & 1 & -1 & 1 & 7 & -44 & -535 &  \\
& &            &      & &       & \vdots          &              &              &       \\
\hline
\end{array}
$
\captionof{table}{Elliptic net $\Psi(\mb{P};E)$ associated to elliptic curve $ E :y^2 + y = x^{3} + x^{2} - 2 x  $ \\ and points $P_1 = (-1, 1),$ $P_2 = (0,-1).$}
\end{center}
The above array is centered at $\Psi_{(0,0)}(\mb{P};E) = 0.$ The bottom left corner represent the 
value  $\Psi_{(-5,-2)}(\mb{P};E)$ and the upper right corner represents $\Psi_{(5,2)}(\mb{P};E).$
For this example we have the isomorphism  $E(\R) \cong \R^*/q^\Z $ such that $P_1\longleftrightarrow u_1 $ 
and 
$P_2\longleftrightarrow u_2 $ with the explicit values
\begin{align*}
        q     &=  0.00035785976153723480818280896702856223292\ldots\ldots,   \\
        u_1   &=  -0.2170771835085414203450101536155224134341\ldots\ldots, \\
        u_2   &=  -0.0077622720300518161218942441500824493219\ldots\ldots.      
\end{align*}
Since $u_1< 0$ and $u_2<0$, by using Theorem \ref{main_theorem_1}, sign of $\Psi_{\mb{v}}
(\mb{P};E)$ is given by either 
\eqref{u1nu2n} or 
\begin{equation}\label{parity_31}
\Parity[\Psi_{\mb{v}}(P;E)] \equiv
\begin{cases}
\Big\lfloor v_1\beta_1+v_2\beta_2 \Big\rfloor +\Big\lfloor \beta_1 + \beta_2 \Big\rfloor v_1v_2 +
\\ \displaystyle \hspace{1em} \Big\lfloor \frac{v_1}{2} \Big\rfloor + \Big\lfloor \frac{v_2}{2} \Big\rfloor + 
v_1v_2+1    \hspace{1.5em}   \pmod 2 & \text{if $v_1+v_2$ is even }
\vspace{0.5em}  \\
\Big\lfloor \beta_1 + \beta_2 \Big\rfloor v_1v_2+ \displaystyle \Big\lfloor \frac{v_1}{2} 
\Big\rfloor + \Big\lfloor \frac{v_2}{2} \Big\rfloor   \pmod 2 &  \text{if $v_1+v_2$ is odd }
\end{cases}
\end{equation}
with 
\begin{align*}
 \beta_1 &= 0.1924929051139423228173765652973000996307 \ldots\ldots,  \\
     \beta_2 &= 0.6122563386959476420220464745591944344939\ldots\ldots.
\end{align*}
By computing the sign of $\Psi_{(2,2)}(\mb{P};E)$ using \eqref{u1nu2n} and \eqref{parity_31} 
we conclude that in this case the parity is given by formula \eqref{parity_31}.

\end{example}

\begin{example}
Let $E$ be the elliptic curve defined over $\R $ given by Weierstrass equation $ y^2 = x^3 - 7x + 10 .$ Let  
$P_1 = (-2,4) $ and $P_2 = (1,2)$ be two linear independent points in $E(\R).$ Let $\mb{P} = 
(P_1,P_2)$. The following table presents the values of $\Psi_{\mb{v}}(\mb{P};E)$ for $\mb{v} = 
(v_1,v_2)$ in range $ 0 \leq v_1\leq 4$ and $ 0 \leq v_2\leq 6.$ 
\begin{center}
$
\begin{array}{ r r r r r  r r }
\hline
&            &             & \vdots          &              &              &       \\
&-54525952 & 1086324736 & 81340137472 & -15800157077504 & -29481936481157120 &  \\
&-163840 & -950272 & 131956736 & 30954979328 & -31977195339776 & \\
 &-2048 & -17408 & 280576 & 85124096 & 30585993216 &   \\
\cdots& 32 & -352 & -9440 & 979488 & 449423648 & \cdots \\
 & 4 & -4 & -276 & -16028 & 8814788 & \\
& 1 & 3 & -31 & -1697 & 67225 & \\
& \mb{0} & 1 & 8 & -409 & -65488 &  \\
&            &             & \vdots          &              &              &       \\
\hline
\end{array}
$
\captionof{table}{Elliptic net $\Psi(\mb{P};E)$ associated to elliptic curve $ E :y^2 = x^3 - 7x + 10 $ \\ and points $P_1 = (-2, 4),$ $P_2 = (1,2).$}
\end{center}
In the above array the bottom left corner represents the value  $\Psi_{(0,0)}(\mb{P};E)$ and the 
upper right corner represents $\Psi_{(4,6)}(\mb{P};E).$
In this case there is an isomorphism $E(\R) \cong \R^*/q^\Z$ such that $P_1\longleftrightarrow u_1$ 
and $P_2\longleftrightarrow u_2 $ with explicit values
\begin{align*}
        q     &=  -0.0004077489822343239057667854741817549172\ldots\ldots,   \\
        u_1   &=  0.001201936348983837429349696735400418601519\ldots\ldots, \\
        u_2   &=  0.008992979917906651664620780969726498312814\ldots\ldots       
\end{align*}
Since $u_1, u_2>0$, by using Theorem \ref{main_theorem_1} and calculating the sign of 
$\Psi_{(2,2)}(\mb{P};E)$, we observe that the sign of $\Psi_{\mb{v}}(\mb{P};E)$ in this case is 
given by \eqref{u1pu2p} with 
\begin{align*}
 \beta_1 &= 0.4307458699792390794239197192204249668246 \ldots\ldots,  \\
     \beta_2 &= 0.3018191057841811111031361738974315389666\ldots\ldots.
\end{align*} 
\end{example}

\section{Uniform distribution of signs} 



\begin{definition}
Let $(S(\mb{v}))$ be an $n$-dimensional array of real numbers. For any $a$ and $b$ with
 $0\leq a< b \leq 1$ and for any positive integers $V_1,V_2,\ldots ,V_n$ denote 
\begin{equation*}
C\big([a,b); ~V_1,V_2,\ldots,V_n\big) = 
\#\Big\{  \mb{v}=(v_1, v_2, \ldots, v_n); 1  \leq  v_i \leq  V_i\hspace{.5em} \mbox{for}~  1 \leq i \leq n~and~
                 \{S(\mb{v})\} \in [a,b)  \Big\},
\end{equation*}
where $\{ S(\mb{v}) \}$ is the fractional part of $S(\mb{v}).$
Then the array $(S(\mb{v}))$ is said to be uniformly distributed mod 1 if 
\begin{equation*}
\lim_{V_1,V_2,\ldots ,V_n \to \infty} \frac{C([a,b);V_1,V_2,\ldots,V_n)}{V_1V_2\ldots V_n} = b-a.
\end{equation*}
\end{definition}

\begin{lemma}[\bf Weyl Criterion]\label{thm=WC}
The array $(S(\mb{v}))$ is uniformly distributed mod 1 if and only if 
\begin{equation*}
\lim_{V_1,V_2,\ldots ,V_n \rightarrow \infty} \frac{1}{V_1V_2\ldots V_n} 
       \sum_{1 \leq  v_1 \leq  V_1}\sum_{1  \leq  v_2 \leq  V_2}\ldots 
                    \sum_{1  \leq  v_n \leq  V_n} e^{2\pi i h S(\mb{v})} = 0
\end{equation*}
for all integers $h\neq 0.$
\end{lemma}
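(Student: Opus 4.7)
The plan is to adapt the classical proof of the Weyl equidistribution criterion to our multi-indexed setting, where the single parameter $N$ is replaced by the product $V_1 V_2 \cdots V_n$ and ``limit'' means that each $V_i$ tends to infinity independently. First I would introduce the averaging operator
\[
A_{\mathbf{V}}(f) := \frac{1}{V_1 V_2 \cdots V_n} \sum_{1 \leq v_i \leq V_i} f(\{S(\mathbf{v})\})
\]
for bounded measurable $f$ on $\mathbb{R}/\mathbb{Z}$. With this notation, the definition of uniform distribution mod $1$ is precisely $A_{\mathbf{V}}(\chi_{[a,b)}) \to b-a$ as every $V_i \to \infty$, while the Weyl hypothesis is $A_{\mathbf{V}}(e_h) \to 0$ for each non-zero integer $h$, where $e_h(x) := e^{2\pi i h x}$. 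The integral identity $\int_0^1 e_h(x)\,dx = 0$ for $h\ne 0$ will be the bridge between the two.

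For the forward direction, assuming uniform distribution mod $1$, I would approximate $e_h$ uniformly from above and below by step functions built from indicators of sub-intervals of $[0,1)$, apply linearity of $A_{\mathbf{V}}$, and use the hypothesis on each indicator, to conclude $A_{\mathbf{V}}(e_h) \to \int_0^1 e_h(x)\,dx = 0$. For the reverse direction, the hypothesis combined with linearity gives $A_{\mathbf{V}}(P) \to \int_0^1 P(x)\,dx$ for every trigonometric polynomial $P$, since the constant term passes through trivially and every mode $e_h$ with $h \neq 0$ contributes zero in the limit.

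The core step is then a standard sandwich argument. For each $\epsilon > 0$ I would choose continuous $1$-periodic functions $g^{\pm}_{\epsilon}$ with $g^{-}_{\epsilon} \leq \chi_{[a,b)} \leq g^{+}_{\epsilon}$ and $\int_0^1 (g^{+}_{\epsilon} - g^{-}_{\epsilon})\,dx < \epsilon$, then invoke Fej\'er's theorem (or Stone--Weierstrass) to uniformly approximate each $g^{\pm}_{\epsilon}$ to within $\epsilon$ by a trigonometric polynomial $P^{\pm}_{\epsilon}$. Taking $\limsup$ and $\liminf$ of $A_{\mathbf{V}}(\chi_{[a,b)})$ as every $V_i \to \infty$ and using the previous paragraph for $P^{\pm}_{\epsilon}$ then sandwiches the asymptotic value within $O(\epsilon)$ of $b-a$, and letting $\epsilon \to 0$ finishes the proof.

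The main technical point, rather than a genuine obstacle, is checking that the multi-parameter limit $V_1,\ldots,V_n \to \infty$ commutes with the sandwich estimates. Because the trigonometric-polynomial approximations and the $\epsilon$-bounds are uniform in $\mathbf{V}$, the classical one-dimensional proof transfers with essentially no change; the product structure of the normalizing factor $V_1 V_2 \cdots V_n$ never interferes with the linearity of $A_{\mathbf{V}}$ or with the uniform-approximation steps. The only mildly careful bookkeeping is that a statement of the form ``for all $\mathbf{V}$ with every $V_i$ sufficiently large'' must replace ``for all $N$ sufficiently large'', but this is routine.
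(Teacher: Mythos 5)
Your proposal is correct and is essentially the standard proof of the Weyl criterion (step-function/Fej\'er approximation in both directions), adapted routinely to the multi-index limit; this is exactly what the paper does, since its proof simply defers to the one- and two-dimensional argument in Kuipers--Niederreiter, noting that it carries over unchanged. No substantive difference.
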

\begin{proof}
The proof follows along the same lines as the proof for $2$-dimensional case. See 
\cite[Chapter 1, Theorem 2.9]{UD}.
\end{proof}

\begin{proposition}\label{prop=udmodm}
Let $\theta_1$ be an irrational number and let $\theta_2,\theta_3,\ldots,\theta_n,$ and $\theta_0$ be arbitrary real
numbers. Then the array $(v_1\theta_1+v_2\theta_2+\cdots + v_n\theta_n+\theta_0)$ is uniformly 
distributed mod 1.
\end{proposition}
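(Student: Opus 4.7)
The plan is to apply the Weyl Criterion (Lemma \ref{thm=WC}) directly, exploiting the fact that the exponential of a sum factors as a product of exponentials, which reduces the multi-dimensional sum to a product of one-dimensional geometric sums.

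First I would fix an arbitrary nonzero integer $h$ and write the Weyl sum
\begin{equation*}
\frac{1}{V_1 V_2 \cdots V_n} \sum_{v_1=1}^{V_1} \cdots \sum_{v_n=1}^{V_n} e^{2\pi i h (v_1 \theta_1 + v_2 \theta_2 + \cdots + v_n \theta_n + \theta_0)}
\end{equation*}
and factor it as
\begin{equation*}
e^{2\pi i h \theta_0} \prod_{j=1}^{n} \left( \frac{1}{V_j} \sum_{v_j=1}^{V_j} e^{2\pi i h v_j \theta_j} \right).
\end{equation*}
The constant factor $e^{2\pi i h \theta_0}$ has absolute value $1$ and is harmless, and each inner factor for $j \geq 2$ is trivially bounded in absolute value by $1$. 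So it suffices to show that the $j=1$ factor tends to $0$ as $V_1 \to \infty$.

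Next I would evaluate the geometric sum for $j=1$. Since $\theta_1$ is irrational and $h \neq 0$, we have $h\theta_1 \notin \mathbb{Z}$, so $e^{2\pi i h \theta_1} \neq 1$, and the standard geometric series formula gives
\begin{equation*}
\left| \sum_{v_1=1}^{V_1} e^{2\pi i h v_1 \theta_1} \right| = \left| \frac{e^{2\pi i h \theta_1}\bigl(e^{2\pi i h V_1 \theta_1} - 1\bigr)}{e^{2\pi i h \theta_1} - 1} \right| \leq \frac{2}{|e^{2\pi i h \theta_1} - 1|}.
\end{equation*}
The right-hand side is a constant independent of $V_1$, so dividing by $V_1$ yields
\begin{equation*}
\left| \frac{1}{V_1} \sum_{v_1=1}^{V_1} e^{2\pi i h v_1 \theta_1} \right| \leq \frac{2}{V_1 \, |e^{2\pi i h \theta_1} - 1|} \longrightarrow 0 \quad \text{as } V_1 \to \infty.
\end{equation*}

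Combining these estimates, the entire product tends to $0$ as $V_1, V_2, \ldots, V_n \to \infty$ (the limit over $V_1$ alone is already enough, since the other factors stay bounded by $1$). By the Weyl Criterion, the array $(v_1\theta_1 + v_2\theta_2 + \cdots + v_n\theta_n + \theta_0)$ is uniformly distributed modulo $1$. There is no real obstacle here; the only point requiring attention is that irrationality of $\theta_1$ is used precisely to ensure the denominator $e^{2\pi i h \theta_1} - 1$ is nonzero for every nonzero integer $h$, which allows the geometric sum to be bounded uniformly in $V_1$.
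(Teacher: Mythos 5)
Your proof is correct and follows the same route as the paper, which simply declares the result a direct consequence of the Weyl Criterion (Lemma \ref{thm=WC}) and cites \cite[Example 2.9]{UD}; you have merely written out the factorization of the Weyl sum and the geometric-series bound that the paper leaves to the reference. The details you supply are exactly the standard ones, and the use of irrationality of $\theta_1$ to guarantee $e^{2\pi i h\theta_1}\neq 1$ is the right key point.
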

\begin{proof}
This is a direct consequence of Theorem \ref{thm=WC}. See also \cite[Example 2.9]{UD}.
\end{proof}


The following proposition is a generalization of a part of Theorem 3.1 of \cite{Niven} for  sequences to  arrays. 
\begin{proposition}\label{thm=ud_grank_modm}
For an irrational number $\theta_1$ and real numbers $\theta_2, \ldots \theta_n$, $\theta_0$, the array 
\begin{equation}\label{seq=form1}
 \Big(\Big\lfloor v_1\theta_1+v_2\theta_2+\cdots + v_n\theta_n+\theta_0\Big\rfloor \Big)
\end{equation}
is uniformly distributed mod $m.$
\end{proposition}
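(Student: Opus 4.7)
The plan is to reduce uniform distribution mod $m$ of the array of floors to uniform distribution mod $1$ of a rescaled linear array, which is already in hand via Proposition \ref{prop=udmodm}.

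First I would establish the key elementary equivalence: for any real $x$ and any $j\in\{0,1,\ldots,m-1\}$,
\[
\lfloor x \rfloor \equiv j \pmod{m} \quad\Longleftrightarrow\quad \Bigl\{\tfrac{x}{m}\Bigr\} \in \Bigl[\tfrac{j}{m},\tfrac{j+1}{m}\Bigr).
\]
This is direct: $\lfloor x\rfloor=mk+j$ for some integer $k$ iff $mk+j\le x<mk+j+1$ iff $k+\tfrac{j}{m}\le \tfrac{x}{m}<k+\tfrac{j+1}{m}$ iff $\{x/m\}\in[j/m,(j+1)/m)$.

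Next I would apply this with $x=v_1\theta_1+v_2\theta_2+\cdots+v_n\theta_n+\theta_0$, so that
\[
\frac{x}{m}=v_1\frac{\theta_1}{m}+v_2\frac{\theta_2}{m}+\cdots+v_n\frac{\theta_n}{m}+\frac{\theta_0}{m}.
\]
Since $\theta_1$ is irrational, $\theta_1/m$ is also irrational, so Proposition \ref{prop=udmodm} implies that the array $(x/m)$ is uniformly distributed mod $1$. Consequently, for each $j$,
\[
\lim_{V_1,\ldots,V_n\to\infty} \frac{C\bigl([j/m,(j+1)/m);V_1,\ldots,V_n\bigr)}{V_1V_2\cdots V_n}=\frac{j+1}{m}-\frac{j}{m}=\frac{1}{m}.
\]
Combined with the equivalence above, this says that the proportion of tuples $\mathbf{v}$ with $1\le v_i\le V_i$ and $\lfloor x\rfloor\equiv j\pmod m$ tends to $1/m$, which is precisely the definition of uniform distribution mod $m$ for the array in \eqref{seq=form1}.

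There is no real obstacle here; the content is entirely in Proposition \ref{prop=udmodm} (and hence in the Weyl criterion, Lemma \ref{thm=WC}). The only care needed is in verifying the floor-to-fractional-part equivalence and in observing that dividing the irrational coefficient $\theta_1$ by the integer $m$ preserves irrationality, so that the hypothesis of Proposition \ref{prop=udmodm} is still satisfied after rescaling.
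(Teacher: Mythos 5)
Your proof is correct and follows essentially the same route as the paper: both rescale by $m$, apply Proposition \ref{prop=udmodm} to the array with coefficients $\theta_i/m$ (noting $\theta_1/m$ remains irrational), and translate uniform distribution mod $1$ of $x/m$ into uniform distribution mod $m$ of $\lfloor x\rfloor$. Your explicit equivalence $\lfloor x\rfloor\equiv j\pmod m\iff\{x/m\}\in[j/m,(j+1)/m)$ makes the final bookkeeping a bit more direct than the paper's chain of floor manipulations, but the underlying argument is the same.
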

\begin{proof}
Proposition \ref{prop=udmodm} for the irrational number $\theta_1/m$ and real numbers 
$\theta_2/m, \ldots, \theta_n/m, \theta_0/m$ yields that the array of real numbers 
$(v_1\frac{\theta_1}{m}+v_2\frac{\theta_2}{m}+\cdots + v_n\frac{\theta_n}{m}+\frac{\theta_0}{m})$ is uniformly distributed mod $1.$ Thus we conclude 
that the array of fractional part 
\begin{equation*}
\left\{ v_1\frac{\theta_1}{m}+v_2\frac{\theta_2}{m}+\cdots + v_n\frac{\theta_n}{m} +\frac{\theta_0}{m}\right \} 
             = v_1\frac{\theta_1}{m}+v_2\frac{\theta_2}{m}+\cdots + v_n\frac{\theta_n}{m}+\frac{\theta_0}{m} 
    - \Big\lfloor v_1\frac{\theta_1}{m}+v_2\frac{\theta_2}{m}+\cdots + v_n\frac{\theta_n}{m}+\frac{\theta_0}{m}\Big\rfloor
\end{equation*}
is uniformly distributed in the unit interval $[0,1).$ By multiplying the terms of the array 
$(\{ v_1\theta_1/m+v_2\theta_2/m+\cdots + v_n\theta_n/m+\theta_0/m \})$ with $m$ we see that the array of 
real numbers
\begin{equation*}
   v_1\theta_1+v_2\theta_2+\cdots + v_n\theta_n+\theta_0 -
    m\Big\lfloor v_1\frac{\theta_1}{m}+v_2\frac{\theta_2}{m}+\cdots + v_n\frac{\theta_n}{m}+\frac{\theta_0}{m}\Big\rfloor
\end{equation*}
is uniformly distributed over the interval $[0,m)$ on the real line. Hence by taking the integer
parts of the terms of the above array we conclude that the terms of the array
\begin{multline}\label{eq=}
\Big\lfloor v_1\theta_1+v_2\theta_2+\cdots + v_n\theta_n+\theta_0 -m 
\Big\lfloor v_1\frac{\theta_1}{m}+v_2\frac{\theta_2}{m}+\cdots + v_n\frac{\theta_n}{m}+\frac{\theta_0}{m}\Big\rfloor\Big\rfloor \\
  =  \Big\lfloor v_1\theta_1+v_2\theta_2+\cdots + v_n\theta_n+\theta_0\Big\rfloor -
    m\Big\lfloor v_1\frac{\theta_1}{m}+v_2\frac{\theta_2}{m}+\cdots + v_n\frac{\theta_n}{m}+\frac{\theta_0}{m}\Big\rfloor
\end{multline}
are uniformly distributed modulo $m.$ 
Furthermore, the removal of the terms $m\lfloor v_1\theta_1/m+v_2\theta_2/m+\cdots 
+ v_n \theta_n/m+\theta_0/m \rfloor,$ form the array \eqref{eq=}, does not effect the uniform distribution mod 
$m.$ 
\end{proof}

\begin{corollary}\label{cor=ud}
Let $(S(\mb{v}))$ be an array of integers that is uniformly distributed mod $m.$ 
Let $(c(\mb{v}))$ be an integer array which is constant mod $m$. Then the array $(S(\mb{v})+c(\mb{v}))$ is uniformly 
distributed mod $m.$ In particular, under the assumptions of Theorem \ref{thm=ud_grank_modm}, the sequence 
\begin{equation}\label{seq=form2}
\Big(\Big\lfloor v_1\theta_1+v_2\theta_2+\cdots + v_n\theta_n +\theta_0\Big\rfloor +c(\mb{v})\Big)
\end{equation}
is uniformly distributed mod $m$ for a fixed real number $\theta_0$.
\end{corollary}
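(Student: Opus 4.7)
The plan is to reduce the statement to a straightforward residue-class counting argument, exploiting the fact that ``$c(\mathbf{v})$ constant mod $m$'' means there is a single integer $c_0$ with $c(\mathbf{v})\equiv c_0\pmod m$ for every $\mathbf{v}$. With such a $c_0$ fixed, the key observation is that for any $j\in\{0,1,\ldots,m-1\}$ one has
\begin{equation*}
S(\mathbf{v})+c(\mathbf{v})\equiv j\pmod m \quad\Longleftrightarrow\quad S(\mathbf{v})\equiv j-c_0\pmod m,
\end{equation*}
so the indicator sets of the two congruence conditions coincide as subsets of $\mathbb{Z}^n$.

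With that equivalence in hand, the counting function introduced in Definition \ref{def=ud_grank_modm} for the array $S+c$ satisfies
\begin{equation*}
C(m,j;V_1,\ldots,V_n)_{S+c}=C(m,j-c_0;V_1,\ldots,V_n)_{S},
\end{equation*}
where the residue $j-c_0$ is reduced mod $m$. Dividing both sides by $V_1V_2\cdots V_n$ and letting each $V_i\to\infty$, the right-hand side tends to $1/m$ by the hypothesis that $(S(\mathbf{v}))$ is uniformly distributed mod $m$. Hence the left-hand side also tends to $1/m$ for every residue $j$, which is precisely the definition of $S+c$ being uniformly distributed mod $m$.

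For the ``in particular'' clause, I would first invoke Proposition \ref{thm=ud_grank_modm} to conclude that the integer array
\begin{equation*}
S(\mathbf{v})=\left\lfloor v_1\theta_1+v_2\theta_2+\cdots+v_n\theta_n+\theta_0\right\rfloor
\end{equation*}
is uniformly distributed mod $m$, and then apply the first part of the corollary to $S(\mathbf{v})+c(\mathbf{v})$, using again that $c(\mathbf{v})$ is constant mod $m$ by assumption.

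There is no real obstacle here; the content is essentially that the uniform distribution property modulo $m$ is invariant under translation by a constant residue, and the main ``check'' is simply to be careful that ``constant mod $m$'' is interpreted as the existence of a single representative $c_0$ shared by all $\mathbf{v}$, since otherwise the statement would fail (e.g.\ adding an array that is itself uniformly distributed mod $m$ need not preserve uniform distribution). Once this interpretation is fixed, the argument is a one-line bijection on residue classes followed by dividing and taking the limit.
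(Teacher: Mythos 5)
Your proof is correct and follows the same route the paper takes: the paper simply asserts that the first claim "follows from Definition \ref{def=ud_grank_modm}" and that the second follows from Proposition \ref{thm=ud_grank_modm} together with the first, and your residue-shift argument is exactly the omitted verification. Your care in reading "constant mod $m$" as a single shared residue $c_0$ is also consistent with how the corollary is applied in the proof of Theorem \ref{ud-sign}.
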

\begin{proof}
The first assertion follows from Definition \ref{def=ud_grank_modm}. The second one follows form Proposition \ref{thm=ud_grank_modm} and the first assertion.
\end{proof}

\begin{proof}[Proof of Theorem \ref{ud-sign}]
Let $(S(\mb{v}))$ be the $n$-dimensional array given by the formulas at the right-hand 
side of the congruences \eqref{eq=for_k=0},~\eqref{eq=for_k>0a}, and \eqref{eq=for_k>0b}. We show that $(S(\mb{v}))$ is uniformly distributed mod $2$. In order to do this, we consider $(S(\mb{v}))$ as union of $2^n$ subarrays   $(S_\ell(\mb{v}))$ ($1\leq \ell \leq 2^n$) according to the parity of $v_i$'s. It is enough to prove that $(S_\ell(\mb{v}))$ is uniformly distributed mod $2$.

For fixed $\ell$, $(S_\ell(\mb{v}))$ is one of the three formulas given in the right-hand 
side of the congruences \eqref{eq=for_k=0},~\eqref{eq=for_k>0a}, and \eqref{eq=for_k>0b}. We consider three cases.

{\bf Case I:}  From \eqref{eq=for_k=0} we have 
$$S_\ell(\mb{v})=  \left\lfloor \sum_{i=1}^{n}v_i\beta_i \right\rfloor + \sum_{1\leq i <j\leq n} 
              \lfloor \beta_i+\beta_j  \rfloor v_iv_j .$$
              
Since $\beta_i$'s are fixed irrational numbers and the parity of $v_i$'s are fixed,               
$(\sum_{1\leq i <j\leq n} 
              \lfloor \beta_i+\beta_j  \rfloor v_iv_j )$ is a fixed array $(c(\mb{v})) $ mod $2$ and thus,  by Corollary \ref{cor=ud}, $(S_\ell(\mb{v}))$ is uniformly distributed mod $2$.

{\bf Case II:}  Similar to Case I, $(\sum_{1\leq i <j\leq k} 
              \lfloor \beta_i+\beta_j  \rfloor v_iv_j +\sum_{k+1\leq i <j\leq n} 
              \lfloor \beta_i+\beta_j  \rfloor v_iv_j )$ is a fixed array $(c(\mb{v}))$ mod $2$. Thus, from \eqref{eq=for_k>0a}, we have
  
 \begin{eqnarray*}
 S_\ell(\mb{v})&\equiv& \displaystyle \Big\lfloor \sum_{i=1}^{n}v_i\beta_i \Big\rfloor 
                              + \sum_{i=1}^{k} \Big\lfloor \frac{v_i}{2} \Big\rfloor +c(\mb{v})
         \pmod 2\\
         &\equiv& \Big\lfloor  \sum_{i=1}^{k} \lfloor  \frac{v_i}{2}\rfloor (2\beta_i+1)+  \sum_{i=k+1}^{n} \lfloor  \frac{v_i}{2}\rfloor (2\beta_i)+\sum_{i=1}^{n}\eta_i\beta_i   \Big\rfloor+c(\mb{v}) \pmod 2,
 \end{eqnarray*}            
where $\eta_i\in \{0, 1\}$ according to the parity of $v_i$. Since $\beta_i$'s are fixed irrational numbers and the parity of $v_i$'s are fixed,               
$\sum_{i=1}^{n}\eta_i\beta_i$ is
a fixed real number $\theta_0$ and thus,  by Corollary \ref{cor=ud}, $(S_\ell(\mb{v}))$ is uniformly distributed mod $2$.

{\bf Case III:}  Similar to Case II, $(\sum_{1\leq i <j\leq k} 
              \lfloor \beta_i+\beta_j  \rfloor v_iv_j +\sum_{k+1\leq i <j\leq n} 
              \lfloor \beta_i+\beta_j  \rfloor v_iv_j )$ is a fixed array $(c(\mb{v}))$ mod $2$. Thus, from \eqref{eq=for_k>0b}, we have
\begin{eqnarray*}
 S_\ell(\mb{v})&\equiv&  \sum_{i=1}^{k} \Big\lfloor \frac{v_i}{2} \Big\rfloor +c(\mb{v})
         \pmod 2,
         \end{eqnarray*}
         which is uniformly distributed mod $2$ by Corollary \ref{cor=ud} and the fact that $\left({\sum_{i=1}^{k} \Big\lfloor \frac{v_i}{2} \Big\rfloor}    \right)$ is uniformly distributed mod $m$. This completes the proof.
\end{proof}
\begin{remark}
We remark that inclusion of the factor {\tiny $(-1)^{ \displaystyle {\sum_{i=1}^{n} v_i^2-\hspace{-0.7em} \sum_{1\leq i <j \leq n} 
\hspace{-0.7em}v_iv_j-1}}$}
does not affect the result of Theorem \ref{ud-sign}.
Note 
that
\begin{align*}
\Parity\left[{\tiny (-1)^{ \displaystyle {\sum_{i=1}^{n} v_i^2-\hspace{-0.7em} 
                            \sum_{1\leq i <j \leq n} \hspace{-0.7em}v_iv_j-1}}}\right] 
    &\equiv  { \displaystyle {\sum_{1\leq i \leq j \leq n} \hspace{-0.7em}v_iv_j+1}}  \pmod{2},    
\end{align*}
which is a constant for $v_i$'s with fixed parities. Thus, we can apply Corollary \ref{cor=ud}.
\end{remark}

\section{Relation with denominator sequences}

Let
\begin{equation}\label{Psi_hat}
\hat{\Psi}_{\mb{v}}(\mb{P};E) = F_{\mb{v}}(\mb{P})\Psi_{\mb{v}}(\mb{P};E) 
                                                 \qquad \text{for all $\mathbf{v}\in \Z^n$},
\end{equation}
where $F(\mb{P}): \Z^n \longrightarrow \Q^* $ is the quadratic form given by 
\begin{equation}\label{Q_F}
F_{\mb{v}}(\mb{P}) = \prod_{1\leq i\leq j \leq n }\gamma_{ij}^{v_iv_j},
\end{equation}
with 
\begin{equation*}
\gamma_{ii} = D_{\mb{e_i}\cdot \mb{P}} = D_{P_i}, \quad \text{and} ~~~ \gamma_{ij} = 
               \frac{D_{P_i+P_j}}{D_{P_i}D_{P_j}}  ~~\text{for}~ i\neq j.
\end{equation*}
We will need the following assertion proved in  \cite{ABY}.
\begin{proposition}\label{PrABY}
Let $E$ be an elliptic curve defined over $\Q$ given by the Weierstrass equation
\begin{equation*}
y^2 + a_1xy + a_3y = x^3+a_2x^2 + a_4x + a_6, \qquad a_i \in \Z. 
\end{equation*}
Let $\mathbf{P} =  (P_1,P_2,\ldots,P_n)$ be an $n$-tuple of linearly independent points in $E(\Q)$
so that each $P_i \pmod{\ell}$ is non-singular for every prime $\ell .$ Then we have 
 \begin{equation}\label{DP_psi_hat}
D_{\mb{v}\cdot P} = |\hat{\Psi}_{\mb{v}}(\mb{P};E)|
\end{equation}
for all $\mathbf{v} \in \Z^n .$
\begin{proof}
See \cite[Proposition 1.7]{ABY}.
\end{proof}
\end{proposition}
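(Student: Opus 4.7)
The plan is to reduce the statement, via Proposition \ref{PrABY}, to the identity $W(\mb{v}) = \hat{\Psi}_{\mb{v}}(\mb{P};E)$, and then to prove the general fact that a quadratic-form rescaling of an elliptic net is again an elliptic net.

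First I would unwind the definitions. Observe that $(-1)^{\Parity[\Psi_{\mb{v}}(\mb{P};E)]} = \Sign[\Psi_{\mb{v}}(\mb{P};E)]$. Next, each $\gamma_{ij}$ appearing in (\ref{Q_F}) is either $D_{P_i}$ or a ratio of strictly positive integers $D_{P_i+P_j}$, $D_{P_i}$, $D_{P_j}$, so $F_{\mb{v}}(\mb{P}) = \prod_{i \leq j} \gamma_{ij}^{v_iv_j}$ is strictly positive for every $\mb{v} \in \Z^n$. Hence Proposition \ref{PrABY} gives
\[
D_{\mb{v}\cdot P} \;=\; |\hat{\Psi}_{\mb{v}}(\mb{P};E)| \;=\; F_{\mb{v}}(\mb{P})\,|\Psi_{\mb{v}}(\mb{P};E)|,
\]
and multiplying by $\Sign[\Psi_{\mb{v}}(\mb{P};E)]$ shows that $W(\mb{v}) = F_{\mb{v}}(\mb{P})\,\Psi_{\mb{v}}(\mb{P};E) = \hat{\Psi}_{\mb{v}}(\mb{P};E)$. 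So the problem reduces to verifying that $\hat{\Psi}$ satisfies the elliptic net recursion (\ref{net recurrence}).

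The key step is the following general lemma: if $\Omega : \Z^n \to R$ is an elliptic net and $F(\mb{v}) = \prod_{i \leq j} \gamma_{ij}^{v_iv_j}$ is a quadratic form of the shape appearing in (\ref{Q_F}), then $F \cdot \Omega$ is again an elliptic net. To prove this I would substitute $F \cdot \Omega$ into each of the three terms of (\ref{net recurrence}) and show that the resulting product of four $F$-values is identical across all three terms; since $\Omega$ already satisfies the recursion, this common factor can then be pulled out and the recursion for $F\cdot\Omega$ follows. Writing $\log F(\mb{v}) = B(\mb{v},\mb{v})$ for a symmetric bilinear form $B$ on $\Z^n$, the claim amounts to checking that
\[
B(\p+\q+\s,\p+\q+\s) + B(\p-\q,\p-\q) + B(\rr+\s,\rr+\s) + B(\rr,\rr)
\]
and its analogues from the second and third terms of (\ref{net recurrence}) all expand to the same symmetric sum $2\bigl(B(\p,\p)+B(\q,\q)+B(\rr,\rr)+B(\s,\s)+B(\p,\s)+B(\q,\s)+B(\rr,\s)\bigr)$, which is manifestly invariant under cyclic permutation of $(\p,\q,\rr)$.

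Combining these ingredients, $W = \hat{\Psi} = F \cdot \Psi$ satisfies (\ref{net recurrence}) because $\Psi$ does, and hence is an elliptic net. The only real computational content is the bilinear bookkeeping in the previous paragraph, and this is the place where the restriction to a \emph{pure} quadratic form (no linear part) matters: the sums of arguments $2\p+2\rr+2\s$, $2\p+2\q+2\s$, $2\q+2\rr+2\s$ in the three terms differ, so a linear-in-$\mb{v}$ factor would spoil the cancellation, but the pure-quadratic factor $F_{\mb{v}}(\mb{P})$ given by (\ref{Q_F}) avoids this issue. Theorem \ref{main_theorem_1} itself plays no logical role in the proof beyond the tautological identity $\Sign[\cdot] = (-1)^{\Parity[\cdot]}$; its actual role in the narrative is to provide, separately, an explicit closed-form description of the signs $(-1)^{\Parity[\Psi_{\mb{v}}(\mb{P};E)]}$ that were used to assemble $W$ from $D_{\mb{v}\cdot P}$.
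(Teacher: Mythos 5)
Your proposal does not prove the statement in question; it proves a different result. The statement to be established is the identity \eqref{DP_psi_hat}, namely $D_{\mb{v}\cdot P} = |\hat{\Psi}_{\mb{v}}(\mb{P};E)|$, and your argument \emph{invokes} precisely this identity as a known fact (``Hence Proposition \ref{PrABY} gives \dots'') in order to deduce that the signed array $W(\mb{v})$ is an elliptic net. As a proof of Proposition \ref{PrABY} this is circular; what you have actually written is, in substance, the paper's proof of Theorem \ref{one-seven} (identify $W$ with $\hat{\Psi}_{\mb{v}}(\mb{P};E)$ using positivity of $F_{\mb{v}}(\mb{P})$, then observe that a pure quadratic-form rescaling of an elliptic net is again an elliptic net, which is \cite[Proposition 6.1]{Stange}). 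That part of the argument, including the bilinear-form bookkeeping showing why the rescaling factor is common to the three terms of \eqref{net recurrence}, is sound --- it is just aimed at the wrong target.

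The genuine content of Proposition \ref{PrABY} is an arithmetic statement: the positive integer $D_{\mb{v}\cdot P}$, defined via the denominators of the coordinates of $\mb{v}\cdot\mb{P}$, equals the absolute value of the rational number $F_{\mb{v}}(\mb{P})\,\Psi_{\mb{v}}(\mb{P};E)$. Establishing this requires comparing, prime by prime, the $\ell$-adic valuations of the net polynomial values with those of the denominators; this is exactly where the hypothesis that each $P_i$ reduces to a non-singular point modulo every prime $\ell$ is used, and your proposal never touches that hypothesis. None of the formal recursion manipulations in your write-up address this local analysis. The paper itself does not reprove the identity either --- its entire proof is the citation to \cite[Proposition 1.7]{ABY}, where the valuation-theoretic work is carried out --- so a self-contained argument would have to supply that local computation rather than the elliptic-net recursion argument you give.
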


\begin{proof}[Proof of Theorem \ref{one-seven}]
First of all observe that since all the terms of a denominator net is positive hence the quadratic form given by \eqref{Q_F} is also positive. Therefore from \eqref{Psi_hat} it follows that    
\begin{equation}\label{par_psi}
 \Parity[\Psi_{\mb{v}}(\mb{P};E)] = \Parity[\hat{\Psi}_{\mb{v}}(\mb{P};E)].
\end{equation}
Now consider $W(\mb{v}):\Z^n\longrightarrow \Q $ such that 
\begin{equation*}
|W(\mb{v})| = D_{\mb{v}\cdot P}  \qquad \text{for all $\mathbf{v}\in \Z^n$},
\end{equation*}
and define
\begin{equation*}
\Sign[W(\mb{v})]   = (-1)^{\Parity[\Psi_{\mb{v}}(\mb{P};E)]},  
\end{equation*}
where the $\Parity[\Psi_{\mb{v}}(\mb{P};E)]$ is given in Theorem \ref{main_theorem_1}. 
Thus,
\begin{equation}\label{WVDP}
W(\mb{v}) = (-1)^{\Parity[\Psi_{\mb{v}}(\mb{P};E)]} D_{\mb{v}\cdot P}.
\end{equation}
By employing \eqref{DP_psi_hat} and \eqref{par_psi} we rewrite \eqref{WVDP} as 
\begin{align*}
W(\mb{v}) &= (-1)^{\Parity[\hat{\Psi}_{\mb{v}}(\mb{P};E)]}|\hat{\Psi}_{\mb{v}}(\mb{P};E)| \\
          &= \Sign[{\hat{\Psi}_{\mb{v}}(\mb{P};E)}]|\hat{\Psi}_{\mb{v}}(\mb{P};E)| \\
          &= \hat{\Psi}_{\mb{v}}(\mb{P};E). 
\end{align*}
 Hence $W(\mb{v})$ is an elliptic net by \cite[Proposition 6.1]{Stange} and the fact that 
 ${\Psi}_{\mb{v}}(\mb{P};E)$ is an elliptic net.
\end{proof}

\end{document}